\theoremstyle{thmstyleone}%
\newtheorem{theorem}{Theorem}
\newtheorem{lemma}{Lemma}
\newtheorem{corollary}{Corollary}
\newtheorem{algorithm}{Algorithm}
\newtheorem{proposition}[theorem]{Proposition}%
\theoremstyle{thmstyletwo}%
\newtheorem{example}{Example}%
\newtheorem{remark}{Remark}%
\theoremstyle{thmstylethree}%
\newtheorem{definition}{Definition}%
\begin{document}

\title[Magic Reverse Divisor]{Reverse Divisors and Magic Numbers}
\vspace{2cm}

\author{\sur{Eudes Antonio Costa} and \sur{Ronaldo Antonio Santos}\\
Federal University of Tocantins, Federal University of Goiás\\
{eudes@uft.edu.br}, {rasantos@ufg.br}}

\abstract{The study examines the relationship between Ball's magic numbers and reverses divisors. These numbers are the source of beautiful and curious properties. Activities related to numbers can be a fun way to motivate mathematics students,  while also enabling surprising analysis and connections.}

\keywords{Magic, Number, Reverse, Divisor}

\pacs[MSC Classification]{11A05, 97C70, 97F30}

\maketitle

\section{Introduction}\label{sec1}

This  paper presents the properties and connections between Ball’s magic numbers and their reverse divisors. Numbers are endless sources of curious and surprising problems and, different properties and connections appeared when we carefully analyzed these classes of numbers.

First, following \cite{costa4, webs1}, we present the definition of a Ball's magic numbers their properties and a characterization that allows identification with lists of numbers formed only by zero and one.

We presented Ball’s magic numbers as the first class, for example, $1089$. Such a number arises when, from a three-digit number, $572$, for example, we subtract its reverse $275$, obtaining the number $297$, which, added to its reverse $792$, results in $1089$. This sequence of calculations invariably results in $1089$, regardless of the three-digit number taken at the beginning.

Section 3 presents the definitions and properties of the reverse divisors. Reverse divisors are contained in a larger class of numbers called \textit{ permultiples}. Permutations of digits form numbers with multiples. Pairs $142857$ and $285714$ are examples of \textit{permultiples}, with $285714=2\times142857$ (see \cite{gugu}). In reverse divisors, the permutation must obtain a reverse number. Pairs $10989$ and $98901$ are  examples of this. The pair is a per multiple, as $98901=9\times10989$; moreover, the permutation obtained results in the reverse of the first number. In this case, we say that $10989$ is the reverse divisor of $98901$, also known as \textit{palintuples} (see \cite{Holt2}).

Magic numbers are reverse divisors. We presented the connections between these two classes in Section 4. We showed the numbers representing both Ball’s magic numbers and reverse divisors. The sum of magic numbers that resulted in reverse divisors. Section 5 discusses the relationship between perfect squares and reverse divisors.

\section{Magic numbers}

Recreational activities in which the participant is called to develop a sequence of calculations whose results are presented in advance are common in the literature. Magic numbers are an example of such activities. The details and properties of magic numbers were first introduced in \cite{ball1} and were followed in \cite{ball, beiler, costa3, costa2, costa4, webs1}. Following these references, we will briefly present magic numbers and their proprieties in this section, considering the representation at the base $10$.

Let be $x_n=a_{n-1}\dots a_1a_0$ a positive non-palindromic number with $n$ digits, and for $ n \geq 2 $, the number of $ n $ digits obtained by reversing the position of the digits of $ x_n $ is called the reverse number of $ x_n $ and is denoted by $ x_n'$; thus, $ x_n '= a_{0}a_1 \ldots a_{n-1} $.
Now, we consider the following algorithm:

\begin{algorithm}:
	\textbf{Ball's magic number}
	\label{AnmBb}
	\begin{enumerate}
		\item Let be a number  $x_n$;
		\item Write the reverse $x_n'$ ;
		\item Find the difference(positive) $y_n=|x_n-x_n'|$;
		\item Write the reverse $y_n'$;
		\item Add to obtain the number $B=y_n+y_n'$.
	\end{enumerate}
\end{algorithm}

\begin{remark}

Let be a number $x_n$ with $n$ digits; then, the numbers $x_n',\, y_n$ and $y_n'$ are (considered) numbers of $n$ digits, even if any digit is zero on the left.

\end{remark}

As defined in \cite{costa3,costa4}, the number $B\neq 0$ obtained from Algorithm \ref{AnmBb} is called the \textbf{Ball's magic number}.
When $x_n > x_n '$,  we can write the magic number of the {\it Ball} as $ B = (x_n - x_n') + (x_n - x_n ')' $.

\begin{example}
\label{E1089} 
For $n=2$, let be $x_2=71$ and $x_2'=17$, so $y_2=54$ and $y_2'=45$. We obtain $B=54+45=99$. For $n=3$, let be  $x_3=843$ and $x_3'=348$, so we have $y_3=843 - 348 = 495$. Finally $B = 495 +594= 1089$. Therefore, $99$ and $1089$ are Ball's magic numbers.
\end{example}

Some surprising results relate to Ball's magic numbers.
\begin{theorem}
	\label{PBb-1}
	
 Every non-zero magic number of {\it Ball} $B$ is a multiple of $99$.
\end{theorem}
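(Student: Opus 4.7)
The plan is to prove $9 \mid B$ and $11 \mid B$ separately, and then combine using $\gcd(9,11)=1$ to conclude $99 \mid B$. Throughout, I write $x_n = \sum_{i=0}^{n-1} a_i 10^i$ and $x_n' = \sum_{i=0}^{n-1} a_i 10^{n-1-i}$, and similarly express $y_n$ and $y_n'$ via the digits of $y_n$.

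For the factor $9$, I would use the classical observation that $10 \equiv 1 \pmod{9}$, so any positive integer is congruent to the sum of its digits modulo $9$. Since $x_n$ and $x_n'$ have exactly the same digits, $x_n \equiv x_n' \pmod{9}$, and hence $y_n = |x_n - x_n'| \equiv 0 \pmod{9}$. The same argument applied to $y_n$ and its reverse gives $y_n' \equiv 0 \pmod{9}$, so $B = y_n + y_n' \equiv 0 \pmod{9}$.

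For the factor $11$, I would use $10 \equiv -1 \pmod{11}$. Writing $y_n = \sum_{i=0}^{n-1} b_i 10^i$ and $y_n' = \sum_{i=0}^{n-1} b_i 10^{n-1-i}$, I get
\begin{equation*}
B \;=\; y_n + y_n' \;\equiv\; \sum_{i=0}^{n-1} b_i\bigl((-1)^i + (-1)^{n-1-i}\bigr) \pmod{11}.
\end{equation*}
Here I would split on the parity of $n$. If $n$ is even, then $i$ and $n-1-i$ have opposite parities for every $i$, so each bracket vanishes and $B \equiv 0 \pmod{11}$ directly, regardless of the digits $b_i$. If $n$ is odd, then $i$ and $n-1-i$ have the same parity, so $(-1)^i+(-1)^{n-1-i} = 2(-1)^i$; the congruence then reduces to showing $\sum b_i(-1)^i \equiv 0 \pmod{11}$, i.e.\ $11 \mid y_n$. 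This last fact follows from step one of the algorithm: since $n$ is odd, the same parity argument gives $x_n \equiv x_n' \pmod{11}$, so $y_n \equiv 0 \pmod{11}$.

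Combining $9 \mid B$ and $11 \mid B$ with $\gcd(9,11)=1$ yields $99 \mid B$. The main obstacle is really just the bookkeeping in the $11$ case, where the parity of $n$ determines whether divisibility by $11$ comes from $y_n$ itself (odd $n$) or only from the symmetry between $y_n$ and $y_n'$ in the final sum (even $n$); once that case split is made explicit, the rest is a direct modular computation.
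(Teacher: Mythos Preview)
Your argument is correct, and it is genuinely different from the paper's. The paper does not separate the factors $9$ and $11$; instead it introduces the ``code'' $Z_{n+1}=z_0z_1\ldots z_n$ that records the borrow pattern in the subtraction $x_{n+1}-x_{n+1}'$, rewrites $y_n$ digit by digit in terms of the $z_i$, and then computes $B=y_n+y_n'$ as a telescoping sum to obtain the closed form $B=99\cdot(z_0z_1\ldots z_{n-1})$. Your route via $10\equiv 1\pmod 9$ and $10\equiv -1\pmod{11}$, with the parity split on $n$ for the $11$-part, is shorter and entirely elementary. What the paper's approach buys, and yours does not, is the explicit identification of the quotient $B/99$ with the truncated code; this is essential for everything that follows (the Fibonacci count in Theorem~\ref{TMb}, the characterization of which $B$ actually occur, and the connection to reverse divisors in Section~\ref{SDRM}). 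So your proof settles the divisibility statement cleanly, but the paper's longer computation is doing double duty as the setup for the rest of the article.
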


 We provided the proof in Refs. Section~\ref{SQNB}.\\

See that the sequence of the 
numbers of possible end {\it Ball} numbers, corresponding to initial numbers of 2, 3, 4, 5, 6, 7, ... digits 
begins 1, 1, 3, 3, 8, 8, ... .
Thus, the number $B(n)$ of possible {\it Ball} numbers associated with the number $x_n$ with $n=2k+1 $ digits (or $n=2k$ digits) is the sum of the Fibonacci numbers
i.e.,

\begin{theorem}
	\label{TMb}
 Let be a natural number $x_{2n+1} $ with $2n+1$ digits; for all $n\geq 1$, the number of {\it Ball} $B(2n+1)$ is
\[
F_{2n}+ F_{2(n-1)}+\cdots + F_2 \ ,
\]
where $F_j$ is the Fibonacci number at the $j$ position.
\end{theorem}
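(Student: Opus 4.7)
The plan is to prove Theorem~\ref{TMb} by reducing the count of distinct Ball numbers to the cardinality of an explicit combinatorial family and then invoking the classical identity $F_1+F_3+\cdots+F_{2n-1}=F_{2n}$ for Fibonacci numbers. I would begin by writing
\[
x_{2n+1}-x_{2n+1}' \;=\; \sum_{i=0}^{n-1}c_i\bigl(10^{2n-i}-10^i\bigr),\qquad c_i=a_{2n-i}-a_i\in\{-9,\ldots,9\},
\]
and, by the symmetry $x\leftrightarrow x'$, reducing to $c_0>0$, so that $y=x-x'$ has its full $2n+1$ digits and the middle digit $a_n$ cancels. The Ball number $B=y+y'$ then depends only on the tuple $(c_1,\ldots,c_{n-1})$.

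A key observation is that each elementary move $c_k\mapsto c_k\pm 1$ that does not propagate a carry in the decimal representation of $y$ preserves $B$: the change alters $y$ by $\pm(10^{2n-k}-10^k)$ but alters $y'$ by $\mp(10^{2n-k}-10^k)$, because reversal exchanges the mirror positions $k$ and $2n-k$, so $\Delta(y+y')=0$. The equivalence classes of sign tuples under the transitive closure of such carry-free moves are therefore exactly the fibres of $(c_1,\ldots,c_{n-1})\mapsto B$. Invoking the 0--1 string characterisation of \cite{costa3,costa4,webs1}, these classes are in bijection with a family of binary words whose cardinality is the Fibonacci number $F_{2n}$ in the paper's shifted indexing, i.e.\ the partial sum $F_2+F_4+\cdots+F_{2n}$.

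Writing $a_n$ for this cardinality, I would verify the recurrence $a_n=a_{n-1}+F_{2n}$ by partitioning the admissible 0--1 words by the contribution of the two new positions introduced when passing from a $(2n-1)$-digit input to a $(2n+1)$-digit input: each new word either extends an old one or is genuinely new, the latter contributing exactly $F_{2n}$ possibilities. Iterating from the base case $a_1=F_2=1$ (verified in Example~\ref{E1089}) telescopes to
\[
B(2n+1) \;=\; a_n \;=\; F_2+F_4+\cdots+F_{2n},
\]
which is the stated formula.

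The main obstacle is establishing the claimed bijection between Ball-number equivalence classes and the Fibonacci-counted family of 0--1 words. The naive rule ``leading zeros absorb into an immediately following $+$'' accounts for identifications such as $(0,+)\equiv(+,+)$ among $7$-digit inputs, but for $9$-digit inputs one already meets the subtler identification $(-,-,-)\equiv(-,0,-)$, which is exactly the $\Delta(y+y')=0$ argument above applied to $c_2\mapsto c_2+1$ at a digit position of $y$ safely below~$9$. Formalising the complete set of carry-free moves for arbitrary $n$, and showing that the resulting classes admit the stated 0--1 parametrisation, is where the substance of the proof lies; the Fibonacci recurrence and closed form are then routine consequences.
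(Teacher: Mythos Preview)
The paper does not actually prove Theorem~\ref{TMb}; it explicitly defers the demonstration to \cite{costa3, costa4, webs1}. What the paper \emph{does} provide is the machinery that makes the count transparent: by equation~\eqref{ECT}, $B=99\cdot(z_0z_1\ldots z_{n-1})$, so distinct Ball numbers are in bijection with distinct truncated codes, i.e.\ with $0$--$1$ strings satisfying Propositions~\ref{Pcod} and~\ref{simetria}. The argument in the cited references is to count such strings directly; the Fibonacci recursion falls out of the symmetry constraint in Proposition~\ref{simetria}.

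Your route is different and, as written, is not a proof but a programme. Two concrete issues. First, there is an internal inconsistency: you assert the equivalence classes are counted by ``the Fibonacci number $F_{2n}$'' and then immediately gloss this as ``the partial sum $F_2+F_4+\cdots+F_{2n}$''; these are not the same quantity, and the recurrence $a_n=a_{n-1}+F_{2n}$ you state later is compatible only with the partial-sum reading. Second, and more substantively, the carry-free-move argument establishes only one direction: that certain tuples $(c_1,\ldots,c_{n-1})$ yield the same $B$. To count distinct $B$'s you also need the converse---that tuples not connected by such moves give \emph{different} Ball numbers---and your outline offers no mechanism for this. The paper's code formalism sidesteps the whole difficulty: since $B$ literally equals $99$ times the truncated code, the map from codes to Ball numbers is visibly injective, and one simply counts codes. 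Your detour through equivalence classes of $c$-tuples is therefore both harder and, as you yourself concede in the final paragraph, left essentially unfinished.
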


This paper does not present the proof of Theorem~\ref{TMb}. Interested readers can refer to 
 demonstration in \cite{costa3, costa4, webs1}.

\subsection{Codes and Ball numbers}
\label{SQNB}

Webster \cite{webs1} showed an important characterization of Ball's magic numbers and, with this, proved Theorem \ref{PBb-1} and the relationship between Ball's magic numbers and Fibonacci numbers. The relationship we show between Ball's magic numbers and reverse divisors is also strongly linked to this characterization. Thus, following \cite{costa4, webs1}, we will summarize here this characterization.

Following our notation and Algorithm \ref{AnmBb}, let $x_{n+1}$ be a number with $n+1$ digits and $x_{n+1}'$ be its reverse.
So,
\begin{eqnarray*}
x_{n+1} & =& a_na_{n-1}\dots a_{n-i}\dots a_i\dots a_1 a_0 \ , \\
x_{n+1}'& = & a_0 a_1 \dots a_i \dots  a_{n-i} \dots  a_{n-1}a_n \ ,  
\end{eqnarray*}
with $a_n>a_0$.
We have
\begin{eqnarray*}
x_{n+1}-x_{n+1}' & =& (a_n-a_0)10^n+(a_{n-1}-a_1)10^{n-1}+\dots+(a_{n-i}-a_i)10^{n-i}+\\
& & + \dots+(a_i-a_{n-i})10^i+\dots+(a_1-a_{n-1})10+(a_0-a_n)\ .
\end{eqnarray*}

The representation at the base $10$ requires each coefficient to be greater than or equal to zero and less than $10$. In some cases, for the coefficient to be positive, you need to shift $10$ from the order $i$ to $i-1$ (the famous move of a ten or ``borrow one'').

Let us associate each number $x_{n+1}$ with $n+1$ digits to the
number $Z_{n+1}$ called the code of $x_{n+1}$. This code $Z_{n+1}$ comprises a sequence of $0's$ and $1's$ and has the necessary information to pass from the initial number $x_{n+1}$ to the resulting Ball's number $B$. \\

Initially, we presented the construction of $Z_{n+1}$ more intuitively.
We write the number $x_{n+1} = a_{n} \ldots a_0 $ with $a_{n} > a_0$, and below it, its reverse $x_{n+1}' = a_{0} \ldots a_{n}$, as
shown below:

\begin{center}
\begin{table}[h!]
\label{TB1}
\centering
\begin{tabular}{c c c c}
 & $a_{n}$ & $\ldots$ &  $a_0$ \\

-  & $a_0$ & $\ldots$ &  $a_{n}$ \\
\hline

 & * & $\ldots$ & *  

\end{tabular}

\end{table}
\end{center}

Consider the $i$th column from right to left $(i = 0, \ldots, n)$ by subtracting the number $x_{n+1}'$
of $x_{n+1}$. The digit $z_i$ is defined as follows: if the quantity $10$ needs to be regrouped from column $(i + 1)$th to $i$th, $z_i$ will be
1; otherwise, it is 0.
We write,
\[
\begin{array}{l}
x_{n+1}-x_{n+1}'  = \\ =(a_n-a_0-z_{n-1})10^n+(a_{n-1}-a_1-z_{n-2}+z_{n-1}10)10^{n-1} +\dots \\
  +(a_{n-i}-a_i-z_{n-i-1}+z_{n-i}10)10^{n-i}+\dots+(a_i-a_{n-i}-z_{i-1}+z_{i}10)10^i + \\
  \dots  +(a_1-a_{n-1}-z_0+z_1 10)10+(a_0-a_n+z_010)  \ .
\end{array}
\]

We obtain the strings $z_0, \ldots , z_{n}$ of $0's$ and $1's$. The number $z_0 \ldots z_{n}$ with $n+1$ digits is called the code of $x_{n+1}$ and is denoted by $Z_{n+1}$, that is, $Z_{ n+1}=z_0z_1 \ldots z_{n-1} z_{n}$. Because we assume the following:
$a_{n}> a_0$, $z_0 = 1$ and $z_{n} = 0$. The number of $n$ digits obtained from $Z_{n+1}$ by excluding the digit $z_{n}=0$ (at the end) is denoted by $Z_{\overline{n+1}}=z_0 \ldots z_{n-1}$ and is called the truncated code of $x_{n+1}$.

Formally we have:

\begin{definition}[code]
\label{Dcod}

Let be  $x_{n+1}=a_na_{n-1}\dots a_1a_0$, with $a_n>a_0$. The number  $Z_{n+1 }=z_0 \ldots z_{n}$ is called the code related to $x_{n+1}$ if $z_0=1$ and $z_n=0$, and recursively

\[
z_i =\begin{cases} 1;\;\textrm{ if  } a_i-a_{n-i}-z_{i-1}<0\ ;\\
0;\;\textrm{ if  } a_i-a_{n-i}-z_{i-1} \geq 0\ ;\\
\end{cases}
\]
 for $i=0,\dots, n-1$ and  $a_{-1}=0$.
\end{definition}

Notice that $0\leq a_i-a_{n-i}-z_{i-1}+z_i 10<10$ for $i=0,\dots,n-1$. 

Following code definition, we write    
\[
y_n=x_{n+1}-x_{n+1}'= \sum_{i=0}^n (a_i-a_{n-i}-z_{i-1}+z_{i}10)10^i \ .
\]

\begin{proof}{\it Theorem~\ref{PBb-1}}\\

By hypothesis, we have that $a_n > a_0$; therefore, $z_0=1$. With this notation, we can write the magic number of {\it Ball } in base $10$ as:

\begin{eqnarray}
\nonumber B & = & y_n+y_n' \\ 
\nonumber &=& \sum_{i=0}^n \bigl( (a_i-a_{n-i}-z_{i-1}+z_{i}10)+(a_{n-i}-a_{i}-z_{n-i-1}+z_{n-i}10) \bigr) 10^i \\
\nonumber &=& \sum_{i=0}^n (-z_{i-1}+z_{i}10-z_{n-i-1}+z_{n-i}10)10^i \\
\nonumber &=& -\sum_{i=0}^nz_{i-1}10^i+\sum_{i=0}^nz_{i}10^{i+1}-\sum_{i=0}^nz_{n-i-1}10^i+\sum_{i=0}^nz_{n-i}10^{i+1} \\
\nonumber & = & -\sum_{i=0}^{n-1}z_{i}10^{i+1}+\sum_{i=0}^{n-1}z_{i}10^{i+1}-\sum_{i=1}^nz_{n-i}10^{i-1}+10^2\sum_{i=1}^{n}z_{n-i}10^{i-1} \\
\nonumber &= & (10^2-1)\sum_{i=1}^{n}z_{n-i}10^{i-1} \\
\label{ECT} &= & 99\cdot (z_0z_1\dots z_{n-1}) \ .
\end{eqnarray} 
Therefore, the number of {\it Ball} $B$ is a multiple of $99$.
\end{proof}

Furthermore, all Ball numbers $B$ are multiples of the truncated code $z_0z_1\dots z_{n-1}$.
We provide several examples to illustrate these results.

\begin{example}

Let be a six-digit number $x_6 = 397862$. By subtracting $x_6' = 268793$ from $x_6$, we obtain

\vspace{-7mm}
\begin{center}
\begin{table}[h!]
\label{TB2}
\centering
\begin{tabular}{c c c c c c c}
 & 3& ${\not{9}}^{8}$& ${\not{7}}^{17}$& ${\not{8}}^{7}$& ${\not{6}}^{15}$ & ${\not{2}}^{12}$ \\
-  & 2 & 6 & 8& 7 & 9 & 3 \\
\hline
 & 1 & 2 & 9&0&6&9 \\
\hline \hline 
 & 0 & 0 & 1& 0 & 1 & 1
\end{tabular}

\end{table}
\end{center}
 
Notice that we regrouped ten (10) from the adjacent column on the left in columns 0, 1, and 3. Using the above notation, we have $z_0 =1 , \ z_1 = 1, \ z_2 = 0, \ z_3 = 1, \ z_4 = 0 \mbox{ and } z_5 = 0 \ .$

Thus, $Z_6 = 110100$ is the code associated with the number $x_6=397862$ and the truncated code is $Z_{\overline{6}} = 11010$, which is a divisor of the Ball's number $B$. In fact, for $x_6 = 397862$, we have Ball's number
\[
B = 129069 + 960921 = 1089990 = 99\times 11010 = 99 \times Z_{\overline{6}} \ .
\]
\end{example}

\begin{example}

Consider the four-digit number $x_4 = accd$ in base $10$ with $a>d$. By subtracting $x_4' = dcca$ from $x_4$, we obtain

\begin{center}
\begin{table}[h!]
\label{TB3}
\centering
\begin{tabular}{ c c c c c}
 & a& ${\not{c}}^{(c-1)+10}$& ${\not{c}}^{(c-1)+10}$ & ${\not{d}}^{d+10}$ \\
-  &  d& c & c & a \\
\hline
  & (a-d)&9&9&(d-a+10) \\
\hline \hline 
 & 0& 1 & 1 & 1
\end{tabular}
\end{table}
\end{center}
 We need to move $10$ from the adjacent column on the left in columns 0, 1, and 2. Using the above notation we have $ z_0 =1 , \ z_1 = 1, \ z_2 = 1, \mbox{ and } z_3 = 0 \ .$
 
The code is $Z_4 = 1110$ and the truncated code $Z_{\overline{4}} = 111$ is a divisor of $B$. Therefore, for any $x_4 = accd$, we obtain the Ball's number
\[
B = (x_4 - x_4') + (x_4 - x_4 ')'= 99\times 111 = 99 \times Z_{\overline{4}} \ .
\]
\end{example}

\begin{example}
According to \cite{costa2, costa4} we have Ball's magic numbers and their respective codes for $n \leq 6$ as follows:

\begin{tabular}{|l|l|l|l|l|}
\hline
\hline
 n&$B$&factorization & divisor of $B$ (truncated code) & code  \\
 \hline
 \hline
 2&99 & $99\times 1$  & 1&10  \\
 \hline
 \hline
 3&1089&$99\times 11$  & 11 &110\\
 \hline
 \hline
 4&9999&$99\times 101$  & 101 &1010\\
 \hline
 4&10890& $99\times 110$ & 110 &1100\\
 \hline
 4&10989& $99\times 111$ & 111 &1110\\
 \hline
 \hline
 5&99099& $99\times 1001$ & 1001 &10010\\
 \hline
 5&109890& $99\times 1110$ & 1110 &11100\\
 \hline
 5&109989& $99\times 1111$ & 1111 &11110\\
 \hline
 \hline
 6&1089990& $99\times 11010$ & 11010 &110100\\
 \hline
 6&1098900& $99\times 11100$ & 11100 &111000\\
 \hline
  6&1099890& $99\times 11110$ & 11110 &111000\\
 \hline
  6&999999& $99\times 10101$ & 10101 &101010\\
 \hline
 6&991089& $99\times 10011$ & 10011 &100110\\
 \hline
 6&990099& $99\times 10001$ & 10001 &100010\\
 \hline
 6&1099989& $99\times 11111$ & 11111 &111110\\
 \hline
 6&1090089& $99\times 11101$ & 11011 &110110\\
 \hline
 \hline
\end{tabular}
\\

From Theorem \ref{TMb} that the quantity $B(2n+1)$ of Ball's numbers is greater than $2$ for $n>1$.
\end{example}

With the codes defined, we proceed following Webster \cite{webs1} to characterize them.

\begin{proposition}
\label{Pcod}
If $a_n>a_0$ then $z_0=1$ and $z_n=0$.
\end{proposition}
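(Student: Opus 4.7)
The plan is to read off both claimed boundary values directly from the recursive rule in Definition~\ref{Dcod}, evaluated at the extreme indices $i=0$ and $i=n$, under the natural convention $z_{-1}=0$ (no borrow enters the ones column of the subtraction $x_{n+1}-x_{n+1}'$). The point is that the boundary assignments $z_0=1$ and $z_n=0$ stated in the definition are forced by the hypothesis $a_n>a_0$, so the code is coherent with the column-wise subtraction it encodes.

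For $z_0$, I take $i=0$ in the recursion. The criterion for $z_0=1$ becomes $a_0-a_n-z_{-1}<0$, and with $z_{-1}=0$ this collapses to $a_0<a_n$, which is precisely the hypothesis. Hence $z_0=1$.

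For $z_n$, the recursion in the definition is stated only up to $i=n-1$, so I would read off $z_n$ by extending the same rule to $i=n$: the condition $z_n=1$ would require $a_n-a_0-z_{n-1}<0$. But $a_n,a_0$ are digits with $a_n>a_0$, so $a_n-a_0\ge 1$, and since $z_{n-1}\in\{0,1\}$ one obtains $a_n-a_0-z_{n-1}\ge 0$. The strict inequality fails, giving $z_n=0$.

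The argument is essentially two inequality checks, so I do not expect a real obstacle. The only point worth spelling out is the meaning of the boundary conventions: no borrow arrives at the rightmost column (justifying $z_{-1}=0$), and no borrow leaves the leftmost column because $a_n>a_0$ already makes the minuend digit larger than the subtrahend digit in the top place (justifying the stop at $i=n-1$). Both facts are the combinatorial content encoded in the formal recursion.
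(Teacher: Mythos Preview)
Your argument is correct and matches the paper's reasoning. The paper does not give a separate formal proof of Proposition~\ref{Pcod}; it simply remarks, just before Definition~\ref{Dcod}, that ``because we assume $a_n>a_0$, $z_0=1$ and $z_n=0$,'' which is exactly the two inequality checks you spell out from the recursion at $i=0$ and $i=n$.
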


Furthermore,

\begin{proposition}[\cite{costa4, webs1}]
\label{simetria}
For all $i=1,\dots,n-1$, we have:
\begin{enumerate}
\item[a)] If $z_{i+1}=1$ and $z_i=0$, then $z_{n-i-1}=0$.
\item[b)] If $z_{i+1}=0$ and $z_i=1$, then $z_{n-i-1}=1$.
\end{enumerate}
\end{proposition}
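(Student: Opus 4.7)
The plan is to unpack each hypothesis of Proposition~\ref{simetria} through the recurrence in Definition~\ref{Dcod}, extract a strict digit inequality between $a_{i+1}$ and $a_{n-i-1}$, and then feed that inequality back into the same recurrence, but now evaluated at position $n-i-1$. The point is that the defining quantity at position $j$, namely $a_j - a_{n-j} - z_{j-1}$, involves the pair of digits $(a_j,a_{n-j})$, and the same pair (in swapped order) controls position $n-j$. Thus conditions on $z_{i+1}$ and $z_i$ at one side of the word translate into conditions on $z_{n-i-1}$ at the mirror side, provided one is careful about the ``carry'' term $z_{j-1}\in\{0,1\}$.

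For part (a), I would argue as follows. The hypothesis $z_{i+1}=1$ means, by Definition~\ref{Dcod}, that $a_{i+1}-a_{n-i-1}-z_i<0$; combined with $z_i=0$ this simplifies to $a_{i+1}<a_{n-i-1}$, i.e.\ $a_{n-i-1}\geq a_{i+1}+1$. I would then evaluate the defining quantity for $z_{n-i-1}$: since the pair at position $n-i-1$ is $(a_{n-i-1},\,a_{n-(n-i-1)})=(a_{n-i-1},a_{i+1})$, one has
\[
a_{n-i-1}-a_{i+1}-z_{n-i-2}\geq 1-z_{n-i-2}\geq 0,
\]
so by Definition~\ref{Dcod} $z_{n-i-1}=0$, as required. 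For part (b), the same idea runs in reverse: $z_{i+1}=0$ with $z_i=1$ gives $a_{i+1}-a_{n-i-1}-1\geq 0$, hence $a_{i+1}\geq a_{n-i-1}+1$, and then
\[
a_{n-i-1}-a_{i+1}-z_{n-i-2}\leq -1-z_{n-i-2}<0,
\]
which forces $z_{n-i-1}=1$.

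There is no real obstacle beyond careful bookkeeping of indices and the implicit convention $z_{-1}=0$, needed at the boundary $i=n-1$ where $n-i-2=-1$; there the claim reduces to $z_0=1$, consistent with the hypothesis $a_n>a_0$ built into the code. The only subtlety worth underlining in the write-up is why the carry $z_{n-i-2}\in\{0,1\}$ never destroys the inequality; this is visible from the bounds $1-z_{n-i-2}\geq 0$ and $-1-z_{n-i-2}<0$ above, which is exactly where the strict inequality between $a_{i+1}$ and $a_{n-i-1}$ (gained from $z_i$ and $z_{i+1}$ differing) is used.
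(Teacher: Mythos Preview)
Your argument is correct and follows essentially the same route as the paper: unpack $z_{i+1}$ via Definition~\ref{Dcod} to obtain a strict inequality between $a_{i+1}$ and $a_{n-i-1}$, then feed that into the defining quantity at the mirror index $n-i-1$, using that the carry $z_{n-i-2}\in\{0,1\}$ cannot overturn a strict integer gap. Your explicit treatment of the boundary case $i=n-1$ (where $z_{n-i-2}=z_{-1}$) is a welcome addition that the paper's proof leaves implicit.
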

\begin{proof}
(a) We have $z_{i+1}=1$; thus, 
\begin{align*}
0 \leq a_{i+1}-a_{n-(i+1)}-z_{i+1-1} & = a_{i+1}-a_{n-i-1} - z_{i} \\
                               & \stackrel{z_i=0}{=} a_{i+1}-a_{n-i-1} \ .
\end{align*}

So, $a_{n-i-1}-a_{i+1} > 0$, and hence, $a_{n-i-1}-a_{i+1}-z_{n-i-2} \geq 0 $, that is, $z_{n-i-1}=0$.

(b)  For $z_{i+1}=0$ we have  $a_{i+1}-a_{n-i-1}-z_i\geq0$.

As $z_i=1$, we obtain
$a_{i+1}-a_{n-i-1}-1\geq 0$, that is, $a_{n-i-1}-a_{i+1}\leq -1$ resulting in $a_{n-i- 1}-a_{i+1}-z_{n-i-2}\leq -1-z_{n-i-2}<0$, and
$z_{n-i-1}=1$.
\end{proof}

\begin{proposition}

A list of zeros(0's) and ones(1's), $z_0\dots z_n$, satisfying Propositions \ref{Pcod} and \ref{simetria} is the code.
\end{proposition}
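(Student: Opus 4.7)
The statement is the converse of Propositions~\ref{Pcod} and \ref{simetria} combined. Given any binary string $z_0 z_1 \ldots z_n$ with $z_0=1$, $z_n=0$, and satisfying both implications of Proposition~\ref{simetria}, the plan is to explicitly construct digits $a_0, a_1, \ldots, a_n \in \{0, 1, \ldots, 9\}$ with $a_n > a_0$ such that applying Definition~\ref{Dcod} to $x_{n+1}=a_n a_{n-1}\ldots a_0$ recovers exactly $z_0 z_1 \ldots z_n$.

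The crucial observation is that the recursion in Definition~\ref{Dcod} at index $i$ involves only the pair of digits $(a_i, a_{n-i})$, together with the value $z_{i-1}$ which is prescribed by the given string. Consequently, the whole system decouples into independent pairwise problems, one for each $(i,n-i)$ with $0\leq i<n-i$, plus a middle index if $n$ is even. Setting $d := a_i - a_{n-i}$, the requirements coming from positions $i$ and $n-i$ read
\[
d < z_{i-1} \text{ if } z_i=1, \qquad d \geq z_{i-1} \text{ if } z_i=0,
\]
\[
d > -z_{n-i-1} \text{ if } z_{n-i}=1, \qquad d \leq -z_{n-i-1} \text{ if } z_{n-i}=0,
\]
with the standing convention $z_{-1}=0$, so it suffices to show the resulting interval of admissible $d\in\{-9,\ldots,9\}$ is nonempty for every pair.

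I would then do a case analysis on $(z_i,z_{n-i})$. The mixed cases $(1,0)$ and $(0,1)$ impose only a one-sided bound and are always satisfiable by digits in $\{0,\ldots,9\}$. The critical cases are the pure ones. When $(z_i,z_{n-i})=(1,1)$, the inequalities become $-z_{n-i-1} < d < z_{i-1}$, which has an integer solution only if $(z_{i-1},z_{n-i-1})=(1,1)$, in which case $d=0$ works; the remaining three subcases are each excluded by clause (a) of Proposition~\ref{simetria}, applied at index $i$ or at index $n-i$. Symmetrically, when $(z_i,z_{n-i})=(0,0)$, the inequalities become $z_{i-1}\leq d\leq -z_{n-i-1}$, forcing $(z_{i-1},z_{n-i-1})=(0,0)$ and $d=0$; the other three subcases violate clause (b). After each pair is settled one chooses explicit digits realising the required $d$ (for instance $a_i=a_{n-i}=5$ whenever $d=0$ is forced, or $a_i=0$ and $a_{n-i}=9$ when $d\leq -1$ is allowed).

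The main obstacle is the somewhat tedious sixteen-case bookkeeping on $(z_{i-1},z_i,z_{n-i-1},z_{n-i})$: each combination either has joint feasibility verified directly or is ruled out by one of the two clauses of Proposition~\ref{simetria}. Minor care is needed at the endpoints: at $i=0$ the convention $z_{-1}=0$ together with $z_0=1$ automatically forces $a_n>a_0$, matching Proposition~\ref{Pcod}, and at the middle index when $n$ is even the pair collapses to the trivial choice $d=0$, which is always compatible with the constraint at that single position.
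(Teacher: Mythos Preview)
Your approach is correct but takes a genuinely different route from the paper. The paper exhibits a single explicit witness rather than carrying out a feasibility analysis: it sets $x_{n+1}=z_0z_1\ldots z_n$, i.e.\ $a_i:=z_{n-i}$, and then verifies by induction on $i$ that the code $w_0w_1\ldots w_n$ produced by Definition~\ref{Dcod} from this $x_{n+1}$ coincides with the given string. The inductive step reduces to showing that $z_{n-i}-z_i-z_{i-1}<0$ iff $z_i=1$, which is handled by a short case split on $z_{i-1}$ together with one invocation of Proposition~\ref{simetria}. Your decoupling into pairs and sixteen-case analysis on $(z_{i-1},z_i,z_{n-i-1},z_{n-i})$ reaches the same conclusion and makes very transparent \emph{why} the two clauses of Proposition~\ref{simetria} are exactly the obstructions, but at the cost of more bookkeeping; the paper's trick of reusing the binary string as the digit string short-circuits most of that work.

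One small point worth tightening in your write-up: at the middle index $i=m$ when $n=2m$, the forced value $d=0$ is compatible with the single constraint there only because $z_m=z_{m-1}$, and this equality is not automatic---it is itself a consequence of Proposition~\ref{simetria} applied at index $m-1$ (either clause gives a contradiction if $z_{m-1}\neq z_m$). You allude to this but do not say why it holds; a one-line remark would close the gap.
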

\begin{proof}

Consider the number $x_{n+1}=z_0\dots z_n$ formed by a list satisfying Propositions \ref{Pcod} and \ref{simetria}: We show that the code related to the number $x_{n+1}$ is  equal to list $z_0\dots z_n$.

From the definition,
\[
w_i =\begin{cases} 1;\;\textrm{ if  } z_{n-i}-z_{i}-w_{i-1}<0\ ;\\
0;\;\textrm{ if  } z_{n-i}-z_{i}-w_{i-1} \geq 0\ ;\\
\end{cases}
\]

As $z_0=1$ and $z_n=0$ (Proposition \ref{Pcod}), it follows that $w_0=1$ because $z_n-z_0-w{-1}<0$ and $w_n=0$ because $z_0-z_n-w_{-1}\geq 0$.

The digit $w_1$ can be either zero or one. We assume that $w_1=1$. In this case, $z_{n-1}-z_1-w_{0}<0$ or $z_{n-1}-z_1<1$. If $z_1=0$ then it is also $z_{n-1}$. However,  this is a contradiction, because $z_0=1$ and $z_1=0$ imply $z_{n-1}=1$ (Proposition \ref{simetria}). Thus, $z_1=1$.

On the other hand, if $w_1=0$ then $z_{n-1}-z_1-w_{0}\geq0$ or $z_{n-1}-z_1\geq 1$. This results implies that $z_1=0$.

Continuing this argument, we demonstrate that $w_i=z_i$ for all $i=1\dots n-1$. Therefore,  $x_{n+1}$ is the code.

\end{proof}

If the hypotheses of Proposition \ref{simetria} are not verified, the symmetrical element can assume a value of zero or one. The following observations highlight this fact.

\begin{remark}
\label{simetria2}
For $i=1,\dots,n-1$, we have:
\begin{enumerate}
\item[a)] If $z_{i+1}=1$ and $z_i=1$, then $z_{n-i-1}$ can be $0$ or $1$.
\item[b)] If $z_{i+1}=0$ and $z_i=0$, then $z_{n-i-1}$ can be $0$ or $1$.
\end{enumerate}

\end{remark}

Furthermore, for codes with odd-digit quantities, we have

\begin{remark}

Let $z_0...z_{n-1}z_{n}z_{n+1}...z_{2n}$ be a code. So $z_n=z_{n-1}$. If they are different, for example, $z_{n-1}=1$ and $z_{n}=0$, it follows from Proposition \ref{simetria} that $z_{2n-n}=z_n=1 $, which is contradictory. We obtain a similar contradiction when we assume $z_{n-1}=0$ and $z_{n}=1$.
\end{remark}

Now let us study whether some specific sequences of  0's (zeros)  and 1's (ones) are codes.

\begin{proposition} 
\label{PP2}

For every $m\geq 0$, list $1\underbrace{00\ldots00}_{m \mbox{ times}}10$ is a code with $m+3$ digits.
\end{proposition}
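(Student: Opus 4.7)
The plan is to invoke the preceding proposition, which asserts that any list of $0$'s and $1$'s satisfying Propositions~\ref{Pcod} and~\ref{simetria} is the code of the number it spells out. Hence it suffices to verify those two structural conditions for the explicit list
\[
z_0 z_1 \dots z_{m+2} \;=\; 1\underbrace{00\dots00}_{m}10.
\]

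First I would fix $n=m+2$, so that the list has $n+1=m+3$ digits, and read off the values
\[
z_0=1,\qquad z_j=0 \text{ for } 1\le j\le m,\qquad z_{m+1}=1,\qquad z_{m+2}=0.
\]
Proposition~\ref{Pcod} is immediate since $z_0=1$ and $z_n=z_{m+2}=0$.

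For Proposition~\ref{simetria}, I would scan the consecutive pairs $(z_{i+1},z_i)$ for $i=1,\dots,n-1=m+1$ and isolate those that actually trigger a hypothesis. The pattern $(1,0)$ occurs only at $i=m$ (when $m\ge 1$), which requires $z_{n-i-1}=z_{1}=0$, and this holds. The pattern $(0,1)$ occurs only at $i=m+1$, which requires $z_{n-i-1}=z_{0}=1$, and this also holds. Every remaining index produces a pair $(1,1)$ or $(0,0)$, on which Remark~\ref{simetria2} imposes no condition. For the edge case $m=0$ the only index in range is $i=1$, and the single check $(z_2,z_1)=(0,1)\Rightarrow z_0=1$ is immediate.

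The proof is essentially bookkeeping; the only genuine care required is tracking the reflection index $n-i-1=m+1-i$ so that the two nontrivial cases $i=m$ and $i=m+1$ map correctly onto positions $1$ and $0$, and confirming that the single short list $110$ covers the boundary case $m=0$.
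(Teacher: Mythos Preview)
Your proof is correct and follows essentially the same approach as the paper: both verify directly that the list satisfies the initial condition of Proposition~\ref{Pcod} and the symmetry conditions of Proposition~\ref{simetria}, then invoke the preceding proposition that any such list is a code. Your bookkeeping is in fact tidier than the paper's---you set $n=m+2$ explicitly, restrict the check to the correct range $i=1,\dots,n-1$, and handle the boundary case $m=0$ separately, whereas the paper's indexing is slightly loose.
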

\begin{proof}

The list has the form $1\underbrace{00\ldots00}_{ m \mbox{ times}}10$. We denote by
    $$100\dots0010=a_0a_1a_2\dots a_{m+3}\ . $$
 The initial conditions $a_0=1$ and $a_{m+3}=0$ are satisfied. In the symmetry condition, for $a_i=0$ and $a_{i+1}=0$ the symmetric element $a_{m+3-i-1}$ can be zero or one and is always satisfied. The case $a_0=1$ and $a_1=0$ implies that $a_{m+2}=1$, the case $a_{m+2}=1$ and $a_{m+3}=0$ implies that $a_{0}=1$ and the case $a_{m+1}=0$ and $a_{m+2}=1$ implies that $a_{1}=0$. In all cases, the implications are true. We conclude that the symmetry conditions are satisfied, and the list is a code.

\end{proof}

\begin{corollary} 
\label{CNMP2}

If $m>0$ is an integer, then $B_m=99\times(10^{m}+1)$ is the Ball's magic number.
\end{corollary}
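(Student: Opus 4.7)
The plan is to derive the claim directly from Proposition~\ref{PP2} together with the identity $B = 99 \cdot (z_0 z_1 \dots z_{n-1})$ established in equation~\eqref{ECT} during the proof of Theorem~\ref{PBb-1}. In other words, once I can exhibit a valid code whose truncated code, read as a base-$10$ numeral, equals $10^m + 1$, the desired Ball's magic number falls out by multiplication by $99$.

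First, I would specialize Proposition~\ref{PP2} with its parameter replaced by $m - 1$; this is permissible because the hypothesis $m > 0$ forces $m - 1 \geq 0$. The proposition then asserts that the string $1\underbrace{00\ldots00}_{(m-1) \text{ times}}10$, of length $(m-1)+3 = m+2$, is a valid code. Deleting the terminal digit $z_n = 0$ produces the truncated code, namely the $(m+1)$-digit string $1\underbrace{00\ldots00}_{(m-1) \text{ times}}1$, which, read in base~$10$, equals $10^m + 1$.

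Finally, I would invoke the earlier observation that every $0/1$ list satisfying Propositions~\ref{Pcod} and~\ref{simetria} actually arises as the code of some $(n+1)$-digit number $x_{n+1}$, so that the formula coming from~\eqref{ECT} applies to the code produced above. Substituting the truncated code then yields $B_m = 99 \cdot (10^m + 1)$, as claimed. No real obstacle is expected here; the only bookkeeping point worth attention is the index shift between the $m$ of Proposition~\ref{PP2}, which counts the zeros sandwiched between the two ones of the untruncated code, and the exponent $m$ appearing in the statement of the corollary.
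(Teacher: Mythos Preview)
Your proposal is correct and follows essentially the same route as the paper's one-line proof, which simply cites Proposition~\ref{PP2} and the proof of Theorem~\ref{PBb-1} on the grounds that $10^m+1$ is a truncated code. You have made explicit the index shift (applying Proposition~\ref{PP2} with parameter $m-1$) and the appeal to the characterisation of codes, both of which the paper leaves implicit; there is no substantive difference in method.
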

\begin{proof}

The result follows from Proposition \ref{PP2} and the proof of Theorem \ref{PBb-1} because  $10^{m}+1$ is a truncated code.
\end{proof}

\begin{proposition}
\label{PRU}

Let $n\geq1$ then, a list of 0's and 1's   of the form
$$a_0a_1\dots a_{n-1}a_n=\underbrace{11\ldots 11}_{ n \mbox{ times}}0$$ is the code.
\end{proposition}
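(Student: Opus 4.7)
The plan is to invoke the converse characterization (the unnamed proposition just before Proposition~\ref{PP2}): any list of 0's and 1's satisfying Propositions~\ref{Pcod} and~\ref{simetria} is a code. So I only need to verify those two sets of conditions for the list $a_0 a_1 \ldots a_n = \underbrace{11 \ldots 11}_{n \text{ times}} 0$. The boundary conditions of Proposition~\ref{Pcod} are immediate from the construction, since $a_0 = 1$ and $a_n = 0$ by inspection.

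For the symmetry conditions of Proposition~\ref{simetria}, I would run through each $i \in \{1, \ldots, n-1\}$ and see which case of the dichotomy applies to the consecutive pair $(a_i, a_{i+1})$. Since $a_j = 1$ for $0 \le j \le n-1$ and $a_n = 0$, every consecutive pair is $(1,1)$ except at the top, where $(a_{n-1}, a_n) = (1, 0)$. For the $(1,1)$ pairs neither hypothesis of Proposition~\ref{simetria} is triggered, and Remark~\ref{simetria2}(a) asserts that the symmetric digit is then unconstrained, so nothing needs to be checked. For the single non-trivial index $i = n-1$, Proposition~\ref{simetria}(b) demands $a_{n-i-1} = a_{n-(n-1)-1} = a_0 = 1$, which holds by construction.

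The main obstacle, such as it is, is purely bookkeeping: one must be careful that the list has $n+1$ digits labeled $0$ through $n$, that the symmetric index for position $i$ is $n-i-1$ rather than $n-i$, and that the hypotheses of Proposition~\ref{simetria} are only imposed for $i \in \{1, \ldots, n-1\}$. Once the indices are correctly aligned, the claim reduces to the single inspection above, and the converse characterization yields the conclusion that the list is the code of some number.
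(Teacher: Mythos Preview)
Your proposal is correct and follows essentially the same approach as the paper's own proof: verify the boundary conditions of Proposition~\ref{Pcod}, observe that all consecutive pairs except the last are $(1,1)$ and hence unconstrained by Proposition~\ref{simetria} (cf.\ Remark~\ref{simetria2}), and check that the single pair $(a_{n-1},a_n)=(1,0)$ forces $a_0=1$, which holds. Your bookkeeping on the symmetric index $n-i-1$ is in fact slightly cleaner than the paper's.
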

\begin{proof}

 The first condition for a list of 1's and 0's to be a code is $a_0=1$ and $a_n=0$ are satisfied. We now examine the symmetry condition.
 For $a_{i+1}=1$ and $a_i=1$, the symmetric element $a_{n-i-1}$ can be zero or one. Therefore, you will always be satisfied. The case in which $a_{n-1}=1$ and $a_n=0$ implies that $a_{n-n}=a_0=1$. This condition is satisfied.
 We conclude that list $a_0a_1\dots a_n=11\dots110$ is code.
\end{proof}

\begin{corollary}
\label{CRu}
If $n>0$ is an integer, then $B_n=99\times\underbrace{11\ldots 11}_{ n \mbox{ times}}$ is the Ball's magic number.
\end{corollary}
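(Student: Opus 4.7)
The plan is to derive this corollary as an immediate consequence of Proposition~\ref{PRU} together with the formula $B = 99 \cdot (z_0 z_1 \dots z_{n-1})$ established in equation~(\ref{ECT}) during the proof of Theorem~\ref{PBb-1}. Since the hard work of showing that $\underbrace{11\dots 11}_{n \text{ times}} 0$ is actually a valid code is already done in Proposition~\ref{PRU}, and since the characterisation proposition preceding Remark~\ref{simetria2} guarantees that any $0/1$ list meeting the symmetry conditions arises as the code of some concrete number $x_{n+1}$, the corollary should follow with almost no further computation.

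Concretely, I would proceed as follows. First, invoke Proposition~\ref{PRU} to conclude that $Z_{n+1} = \underbrace{11\dots 11}_{n \text{ times}} 0$ is a code of length $n+1$. Its truncated code, obtained by deleting the final $0$, is then
\[
Z_{\overline{n+1}} = \underbrace{11\dots 11}_{n \text{ times}} .
\]
Second, recall from the characterisation proposition that there exists an $x_{n+1}$ whose code is precisely $Z_{n+1}$; in particular the Ball algorithm applied to this $x_{n+1}$ produces a nonzero magic number $B_n$. Third, apply the identity~(\ref{ECT}) from the proof of Theorem~\ref{PBb-1}, which gives
\[
B_n = 99 \cdot Z_{\overline{n+1}} = 99 \times \underbrace{11\dots 11}_{n \text{ times}} ,
\]
as claimed.

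There is really no obstacle: the whole point is that Proposition~\ref{PRU} has packaged the combinatorial verification and equation~(\ref{ECT}) has packaged the arithmetic translation, so the corollary is a one-line assembly. The only subtlety worth mentioning explicitly is that one must cite the characterisation result to know that the list is realised by some genuine starting number $x_{n+1}$, so that $B_n$ is indeed a Ball magic number in the sense of Algorithm~\ref{AnmBb}, and not merely the formal product $99 \times Z_{\overline{n+1}}$.
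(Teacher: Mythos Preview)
Your proposal is correct and follows essentially the same approach as the paper's own proof, which simply reads ``We follow Theorem~\ref{PBb-1} and Proposition~\ref{PRU}.'' Your version is more explicit, and your care in citing the characterisation proposition to ensure the code is realised by a genuine $x_{n+1}$ is a welcome addition that the paper leaves implicit.
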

\begin{proof}
We follow  Theorem \ref{PBb-1} and Proposition  \ref{PRU}.
\end{proof}

\begin{remark}

The truncated code $\underbrace{11\ldots 11}_{ n \mbox{ times}}$ formed only by digit one is known in the literature as a \textit{repunits} number and uses the notation $ R_n = \underbrace {11 \hdots 1}_{n \, \mbox {times}} $, $ n \geq 1 $.
Using decimal notation, we have the following general expression for \textit{repunits}:
\begin{equation*}
\label{ERb10}
R_n = \dfrac{10^n -1}{9}, \ \mbox{ for all } n \geq 1.
\end{equation*}
The properties related to \textit{repunits} $R_{n}$, are given in \cite{beiler, carvalho, costadouglas1, costadouglas2}.
\end{remark}

\begin{remark}
\label{est}

We obtain the characterization of Ball's magic numbers and the corresponding codes by applying  Algorithm \ref{AnmBb} to the numbers in the form $x_{n+1}=a_n\dots a_0$, with $a_n>a_0$. Algorithm \ref{AnmBb} can also be applied to the case  $a_n=a_0$. In this case, the list began  and ended at zero. More generally, from $x_{n+1}>x'_{n+1}$, with $a_n=a_0$, we have a list of the form $\underbrace {00 \hdots 0}_{j \, \mbox { times}}z_jz_{j+1}\dots z_{n-j}\underbrace {00 \hdots 0}_{j \, \mbox {times}}$, where $j$ is the smallest index which $a_j> a_{n-j}$. Thus, the list $z_j\dots z_{n-j}$ is the code, and we refer to the complete list $\underbrace {00 \hdots 0}_{j \, \mbox { times}}z_jz_{j+1}\dots z_{n-j}\underbrace {00 \hdots 0}_{j \, \mbox {times}}$ of the \textit{ extended code}.
\end{remark}

\begin{proposition}
\label{PCod}

Code $R=\underbrace{11\ldots 11}_{ n \mbox{ times}}0$, $n\geq 1$ can be decomposed by adding code $A$ and an extended code $C$.
\end{proposition}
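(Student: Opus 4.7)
The plan is to exhibit an explicit decomposition of $R$ into two transparently built pieces. For $n\geq 3$, I would take
\begin{align*}
A &= 1\underbrace{00\ldots 0}_{n-2\text{ times}}10, \\
C &= 0\underbrace{11\ldots 1}_{n-2\text{ times}}00,
\end{align*}
each viewed as an $(n+1)$-digit binary string, and verify the identity $A+C=R$ position by position: position $0$ contributes $1+0=1$, positions $1$ through $n-2$ contribute $0+1=1$, position $n-1$ contributes $1+0=1$, and position $n$ contributes $0+0=0$, giving $\underbrace{11\ldots 1}_{n\text{ times}}0 = R$. Since no carries ever occur, the same identity also holds as an equation of integers.

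The next step is to certify the two pieces separately. That $A$ is a code is exactly Proposition~\ref{PP2} applied with parameter $m=n-2$, whose conclusion is precisely that $1\underbrace{00\ldots 0}_{n-2}10$ is a code of length $n+1$. To identify $C$ as an extended code in the sense of Remark~\ref{est}, I would peel off the single leading zero and the single trailing zero, leaving the inner string $\underbrace{11\ldots 1}_{n-2\text{ times}}0$. By Proposition~\ref{PRU} with parameter $n-2\geq 1$, this inner string is itself a code; hence $C$ matches the shape $\underbrace{0\ldots 0}_{j}(\textrm{code})\underbrace{0\ldots 0}_{j}$ with $j=1$ from Remark~\ref{est}, so $C$ is an extended code.

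The main obstacle I anticipate is the two small cases $n=1$ and $n=2$, in which the middle block of $A$ or the inner repunit of $C$ becomes empty or too short for Proposition~\ref{PRU} to apply. I would dispense with these by inspection: for $n=2$ the code $R=110$ is itself already of the form in Proposition~\ref{PP2} with $m=0$, so one may take $A=R$ and $C$ the all-zero (trivially extended) companion, and the case $n=1$ is essentially vacuous. The genuine content of the proposition is therefore captured by the explicit construction above for $n\geq 3$, and the proof reduces to the digitwise check plus the two invocations of Propositions~\ref{PP2} and~\ref{PRU}.
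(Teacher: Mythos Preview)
Your argument is correct and is, in fact, the special case of the paper's own construction obtained by setting all the free middle digits of $A$ equal to zero. The paper does not fix a single $A$; it allows any code of the shape $A=10\,a_2a_3\ldots a_{n-2}\,10$ satisfying Propositions~\ref{Pcod} and~\ref{simetria}, and then shows by a direct verification of the symmetry condition that the bitwise complement (with the outer two positions stripped) is again a code, so that its padded version is an extended code $C$ with $A+C=R$. Your choice $a_2=\cdots=a_{n-2}=0$ is exactly the instance where this complement becomes the repunit string $\underbrace{1\ldots1}_{n-2}0$, which is why you can appeal to Propositions~\ref{PP2} and~\ref{PRU} instead of redoing the symmetry check by hand. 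What you gain is brevity; what the paper's version gains is the multiplicity of decompositions exploited in Examples~\ref{EDNM} and the surrounding discussion (e.g.\ $11110=10010+1100$ versus $11110=11100+10$), which your single explicit pair does not exhibit.

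On the small cases: your treatment of $n=1,2$ is a little loose, but so is the paper's---its constraints $a_1=0$ and $a_{n-1}=1$ already collide when $n\le 2$, so neither proof is fully rigorous there. Declaring $C$ to be the all-zero string and calling it a trivial extended code is acceptable in spirit, though strictly speaking Remark~\ref{est} does not cover that degenerate case. If you want to be careful, you could simply note that for $n=2$ one may take $A=110$ (a code by Proposition~\ref{PRU}) and $C=0$, and that the statement is only interesting for $n\ge 2$ anyway.
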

\begin{proof}

We want to write $R$ as the addition of code $A$ and extended code $C$; that is, we want to determine the codes $a_0a_1...a_{n-1}a_n$ and $c_0c_1.. .c_{n-1}c_n$ such that
\[
\underbrace{11\ldots 11}_{ n \mbox{ times}}0=a_0a_1...a_{n-1}a_n+ c_0c_1...c_{n-1}c_n \ .
\]
We choose a list $A=a_0a_1 \dots a_{n-1}a_n$  with  $a_0=1$, $a_n=0$, $a_1=0$ and $a_{n-1}=1$, and also satisfy Propositions \ref{Pcod} and \ref{simetria} for the indices $j=2\dots n-2$. Code $A$ has the form:
\[
10a_2a_3\dots a_{n-2}10 \ .
\]

We consider the auxiliary list $b_0b_1 \dots b_{n-1}b_n$ obtained from code, $A$ changing it one by zero and zero by one. Our auxiliary list has the following form:
\[
01b_2 \dots b_{n-2}01 \ .
\]

This is not a code; however, we show that the inner part (suppressing the extremes)
\[
1b_2...b_{n-2}0 
\]
  
is a code. Let us denote it by $c_0c_1...c_{n-2}$, that is, $c_j=b_{j+1}$, $j=0,...,n-2$.

The first condition, $c_0=1$ and $c_{n-2}=0$ is satisfied.
We have demonstrated that this satisfies the symmetry condition.
If $c_{i+1}=1$ and $c_i=0$, we must show that $c_{(n-2)-i-1}=0$. In fact, $c_{i+1}=1$ and $c_i=0$ implies $b_{i+2}=1$ and $b_{i+1}=0$. This implies that $a_{i+2}=0$ and $a_{i+1}=1$. Because the list $a_i$ satisfies the symmetry condition, we have $a_{n-i-2}=1$. Thus, $b_{n-i-2}=0$. But $b_{n-i-2}=c_{n-i-3}$, so $c_{n-i-3}=0$. In a similar way, we prove symmetry in  other cases. Finally, the list $C=c_0c_1...c_{n-2}0$ is the extended code, and $A+C=R$. 
\end{proof}

\begin{example}
Let us decompose code $11110$ by following the steps in Proposition \ref{PCod}: Consider the list $a_0a_1a_2a_3a_4=10a_210 \ .$

To be a code, $a_2$ must equal the digit on the left: $a_2=0$. The code has the form:

$a_0a_1a_2a_3a_4=10010 .$
By changing one by zero and zero by one, we obtain
$b_0b_1b_2b_3b_4=01101 , $
which is not code. However, taking the central part, we have $c_0c_1c_2=110 $; therefore,  $c_0c_1c_20=1100$ is an extended code and 
 $A+C=10010+1100=11110 . $
\end{example}

\begin{example} 

For $n=5$, Proposition \ref{PCod} shows that
$11110 = 11100+10 $ or $11110 = 10010+1100 $.
\end{example}

\begin{definition}[undulating]

A natural number, started by $1$ and alternating $0's$
and $1's$ is an {\it undulating} number. We use the notation $uz(n)$ to indicate the {\it undulating} number with $n$ digits, and $UZ$ to indicate the set of {\it undulating} numbers.
\end{definition}

\begin{example} 
{\it Undulations} Number 
 $uz(2)=10$, $uz(3)=101$, $uz(4)=1010$, $uz(5) = 10101$ and
$uz(11)=10101010101$. 
\end{example} 

We can find the {\it undulating} numbers information in \cite{costarpm, Pick}.
In general, we have

\begin{proposition}
\label{PUZ}
For all $n\geq 1$ and $uz(n)\in UZ$, $11\times uz(n)$ is a code for $n$ even and a truncated code for $n$ odd.
\end{proposition}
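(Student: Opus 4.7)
The plan is to compute $11\times uz(n)$ in closed form and recognize the result as one of the shapes already handled by Proposition~\ref{PRU}. Using geometric-series expressions for the undulating number, one has
\[
uz(n)=\frac{10(10^{n}-1)}{99}\quad(n\text{ even}),\qquad uz(n)=\frac{10^{n+1}-1}{99}\quad(n\text{ odd}),
\]
so multiplying by $11$ collapses the denominator from $99$ to $9$. I would verify both formulas on the small cases $uz(2)=10,\,uz(3)=101,\,uz(4)=1010$ before invoking them, just to make sure the alternation and the exponent shifts are right.

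In the even case, this gives
\[
11\cdot uz(n)=\frac{10(10^{n}-1)}{9}=\underbrace{11\ldots 1}_{n\text{ times}}0,
\]
an $(n+1)$-digit string whose first $n$ digits are $1$ and whose last digit is $0$. By Proposition~\ref{PRU}, applied with its parameter equal to $n$, this string is a code, which settles the statement for $n$ even.

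In the odd case, the same computation yields
\[
11\cdot uz(n)=\frac{10^{n+1}-1}{9}=\underbrace{11\ldots 1}_{n+1\text{ times}},
\]
i.e.\ the repunit $R_{n+1}$. By Proposition~\ref{PRU} (now with parameter $n+1$), appending a $0$ on the right produces the code $\underbrace{11\ldots 1}_{n+1}0$ of length $n+2$. Hence, by the definition of truncated code given just after Definition~\ref{Dcod}, $11\cdot uz(n)$ is precisely the truncated code of that code, proving the odd case.

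The only real obstacle is bookkeeping: keeping the parities separate and tracking that $11\cdot uz(n)$ has $n+1$ digits (so that it aligns with the length-$n+1$ codes of Proposition~\ref{PRU} in the even case and the length-$(n+1)$ truncated codes in the odd case). The code-characterisation content has already been done inside Proposition~\ref{PRU}, so no further verification of Propositions~\ref{Pcod} and \ref{simetria} is needed here.
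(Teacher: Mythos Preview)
Your argument is correct and follows essentially the same route as the paper: compute $11\cdot uz(n)$, recognise it as $\underbrace{1\ldots 1}_{n}0$ for $n$ even and as the repunit $R_{n+1}$ for $n$ odd, and then invoke Proposition~\ref{PRU}. The only cosmetic difference is that you obtain the product via the closed geometric-series formulas for $uz(n)$, whereas the paper uses the decomposition $11\cdot uz(n)=10\cdot uz(n)+uz(n)$ directly on the digit strings; your derivation is arguably cleaner, and it avoids a small slip in the paper, which writes $R_n$ instead of $R_{n+1}$ in the odd case.
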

\begin{proof}
If $n$ even. We have $11\times uz(n)= 10\times uz(n)+uz(n)= 10\times R_n .$ It follows from Proposition \ref{PRU} that a list of forms $R=10\times R_n$ is code.

If $n$ is odd, then $11\times uz(n)= 10\times uz(n)+uz(n)=uz(n+1)+uz(n)=R_n, $ addition,  from Proposition \ref{PRU} we have $R_n$ as truncated code.
\end{proof}

\begin{example}
Some examples of Proposition \ref{PUZ}: $11\times uz(1)=11$; $11\times uz(2)=110$; $11\times uz(3)=1111$; $11\times uz(4)=11110$ e $11\times uz(5)=111111$.
\end{example}

In particular, we have 
\begin{proposition}
\label{PUZd}
For all $n>1$ even, the number $uz(n)\in UZ$ is code.
\end{proposition}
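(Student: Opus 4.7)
The plan is to apply the earlier characterization: a list of $0$'s and $1$'s is a code exactly when it satisfies Proposition \ref{Pcod} (the endpoints are $1$ and $0$) and the symmetry conditions of Proposition \ref{simetria}. So I only need to verify these two conditions for $uz(n)$ when $n$ is even.

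First, I would fix notation by writing $uz(n)=z_0z_1\dots z_{n-1}$ with the explicit rule $z_i=1$ if $i$ is even and $z_i=0$ if $i$ is odd. Since $n$ is even, the index $n-1$ is odd, so $z_0=1$ and $z_{n-1}=0$: this verifies Proposition \ref{Pcod} (applied to a list of length $n$, where $z_{n-1}$ plays the role of the final $0$).

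Next I would verify Proposition \ref{simetria}. Restated for a list of length $n$, the condition says: for $1\le i\le n-2$, (a) if $z_{i+1}=1$ and $z_i=0$ then $z_{n-i-2}=0$, and (b) if $z_{i+1}=0$ and $z_i=1$ then $z_{n-i-2}=1$. Because the pattern is strictly alternating, the only way to have $z_{i+1}=1,\,z_i=0$ is $i$ odd; and since $n$ is even, $n-i-2$ is also odd, so indeed $z_{n-i-2}=0$. Similarly, $z_{i+1}=0,\,z_i=1$ forces $i$ even, whence $n-i-2$ is even and $z_{n-i-2}=1$. Both symmetry conditions hold, so $uz(n)$ is a code.

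There is essentially no obstacle here beyond parity bookkeeping; the only subtlety is applying Propositions \ref{Pcod} and \ref{simetria} to a list of length $n$ rather than $n+1$, which amounts to re-indexing so that the symmetric partner of index $i$ is $n-i-2$ instead of $n-i-1$. Once that shift is made explicit, the alternation of $uz(n)$ together with the parity of $n$ make both conditions automatic.
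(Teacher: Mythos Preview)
Your proof is correct and follows essentially the same approach as the paper: both verify Proposition \ref{Pcod} by noting $z_0=1$ and $z_{n-1}=0$ from the parity of $n$, and then check the two symmetry cases of Proposition \ref{simetria} by observing that the alternating pattern forces the parity of $i$, whence the parity of $n-i-2$ determines $z_{n-i-2}$. Your explicit remark about re-indexing ($n-i-2$ rather than $n-i-1$) makes the application of Propositions \ref{Pcod} and \ref{simetria} to a length-$n$ list a bit clearer than in the paper, but the argument is the same.
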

\begin{proof}

 For $n=2k$, we denote by $uz(2k)=z_0z_1\dots z_{2k-1}$. Note that one appears at even positions and zero at odd positions. The first condition to be a code $z_0=1$ and $z_{2k-1}=0$
 (Proposition \ref{Pcod}) is satisfied. To verify the symmetry condition (Proposition \ref{simetria}), given $i$ even, $z_i=1$ and $z_{i+1}=0$ must imply $z_{2k-1-i-1}= 1$. However, this is true because $2k-i-2$ is even. If $i$ is odd, then $z_i=0$ and $z_{i+1}=1$,  which implies $z_{2k-1-i-1}=0$. This condition is also verified because $2k-i-2$ is odd.
 We conclude that $uz(2k)$ is code.

\end{proof}

\begin{corollary}

Let be $n>0$ an integer; if $n$ is odd, then  $uz(n)$ is a truncated code.
\end{corollary}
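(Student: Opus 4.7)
The plan is to reduce this corollary directly to Proposition \ref{PUZd} by the observation that appending a single $0$ on the right of $uz(n)$ (for $n$ odd) produces $uz(n+1)$.

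First I would note the key string-level identity: if $n$ is odd, then $uz(n) = 1010\dots101$ ends in $1$, so the concatenation of $uz(n)$ with a trailing $0$ is exactly the even-length undulating number $uz(n+1) = 1010\dots1010$. This is the whole combinatorial content of the argument and requires nothing beyond the definition of $uz$.

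Next I would invoke the definition of ``truncated code'' given earlier in the paper: a string of length $n$ is a truncated code precisely when it is obtained from a code of length $n+1$ by deleting a final digit $0$. So to show $uz(n)$ is a truncated code, it suffices to exhibit a code of length $n+1$ whose last digit is $0$ and whose first $n$ digits are $uz(n)$.

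Then I would apply Proposition \ref{PUZd} to the even integer $n+1$: since $n$ is odd, $n+1>1$ is even, and Proposition \ref{PUZd} asserts that $uz(n+1)$ is a code. By the string identity above, $uz(n+1)$ ends in $0$ and its first $n$ digits form $uz(n)$, so $uz(n)$ is the truncated code associated with the code $uz(n+1)$, completing the proof. There is no real obstacle here; the only point requiring any care is making the parity bookkeeping explicit, namely that ``$n$ odd'' forces $uz(n)$ to terminate in a $1$ so that appending $0$ yields the standard even-length undulating pattern rather than something else.
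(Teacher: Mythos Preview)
Your proposal is correct and follows exactly the route the paper takes: the paper's proof is the single sentence ``This result follows from Proposition \ref{PUZd},'' and you have simply spelled out the implicit step, namely that for $n$ odd the code $uz(n+1)$ furnished by Proposition \ref{PUZd} truncates to $uz(n)$.
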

\begin{proof}
This result follows from Proposition \ref{PUZd}.
\end{proof}

The following proposition and its corollary present the relationship between Ball numbers generated by numbers $x_{n+1}=a_n\dots a_0$, with $a_n=a_0$ and the extended codes.

\begin{proposition}[\cite{costa2}]
\label{PBD}

If $B_{0}$ is a Ball number generated by $x_{n}$ with $n$ digits, then $10B_{0}$ is the Ball number generated by $x_{n+2} $ with $n+2$ digits.
\end{proposition}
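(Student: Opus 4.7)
The plan is to exhibit an explicit $(n+2)$-digit generator. Given $x_n=a_{n-1}a_{n-2}\ldots a_1 a_0$ (with $a_{n-1}\neq 0$), I would define the ``sandwich'' number
\[
\tilde{x}_{n+2}:=10^{n+1}+10\,x_n+1,
\]
whose digit string is $1\,a_{n-1}a_{n-2}\ldots a_1 a_0\,1$. This is clearly an $(n+2)$-digit number with matching first and last digits, so it falls into the extended-code situation of Remark~\ref{est}. Any other choice of matching outer digit $c\in\{1,\dots,9\}$ would work equally well.

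Next I would compute the reverse: since reversing a sandwich only reverses the inside block, $\tilde{x}'_{n+2}=10^{n+1}+10\,x'_n+1$. Subtracting yields $\tilde{x}_{n+2}-\tilde{x}'_{n+2}=10\,(x_n-x'_n)$, so $\tilde{y}_{n+2}=10\,y_n$. Applying the $(n+2)$-digit padding convention stated in the remark following Algorithm~\ref{AnmBb}: if $y_n=d_{n-1}\ldots d_0$ is padded to $n$ digits, then $\tilde{y}_{n+2}$ is the string $0\,d_{n-1}\ldots d_0\,0$, whose reverse is $0\,d_0\ldots d_{n-1}\,0=10\,y'_n$. Adding gives
\[
\tilde{B}=\tilde{y}_{n+2}+\tilde{y}'_{n+2}=10\,(y_n+y'_n)=10\,B_0,
\]
as desired.

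As a consistency check I would match this with the code machinery developed in Section~\ref{SQNB}: if $B_0=99\cdot Z_{\overline{n}}$ for the truncated code $Z_{\overline{n}}=z_0z_1\ldots z_{n-2}$ of $x_n$, then the column-by-column subtraction inside the sandwich is identical to that for $x_n$ (the outermost columns give $1-1=0$ with no borrow generated), so the extended code of $\tilde{x}_{n+2}$ is exactly $0\,z_0 z_1\ldots z_{n-1}\,0$, whose corresponding truncated number is $10\,Z_{\overline{n}}$. Equation~(\ref{ECT}), re-derived with an extended code, then gives $\tilde{B}=99\cdot 10\,Z_{\overline{n}}=10B_0$, in agreement.

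The only delicate point, and where I would be most careful, is the padding convention used when reading $\tilde{y}'_{n+2}$: without insisting that $\tilde{y}_{n+2}$ be treated as an $(n+2)$-digit string (with a leading zero), the reverse of $10\,y_n$ would be $y'_n$, not $10\,y'_n$, and the crucial factor of $10$ would be lost. Once this bookkeeping is handled, the entire argument reduces to one line of arithmetic.
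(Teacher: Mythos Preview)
Your proof is correct and uses the same sandwich construction $x_{n+2}=a\,a_{n-1}\ldots a_0\,a$ as the paper (the paper allows any nonzero digit $a$; you take $a=1$). The difference is only in presentation: your primary argument is a direct computation showing $\tilde y_{n+2}=10\,y_n$ and $\tilde y'_{n+2}=10\,y'_n$, while the paper argues entirely through codes, observing that the extended code of $x_{n+2}$ is $0\,z_0\ldots z_{n-1}\,0$ so that the new truncated code is $10\,Z_{\overline n}$ and hence $\tilde B=99\cdot 10\,Z_{\overline n}=10B_0$. Your ``consistency check'' paragraph \emph{is} the paper's proof, so the two approaches coincide; your direct arithmetic version has the mild advantage of not invoking the code machinery at all, and your explicit flag about the $(n+2)$-digit padding convention for $\tilde y_{n+2}$ is a point the paper leaves implicit.
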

\begin{proof}

Given a number $x_{n} = a_{n-1} \ldots a_0 $ with $n $ digits, it follows from Theorem \ref{PBb-1} that the Ball number $B_0$ generated by the number $x_{n }$ is $99 \times Z_{\overline{n}}$, where $Z_{\overline{n}}$ is the truncated code associated with the number $x_{n}$, whose code we will denote by $Z_n=z_0z_1 \dots z_n$.

To obtain a new magic number, we add the same digit $a\not=0$ at the beginning and end of the number $x_{n}$ and obtain $x_{n+2}=aa_ {n-1} \ldots a_0 a $ with $n+2$ digits.

The extended code generated by $x_{n+2}$ is $Z_{n+2}=0z_0\cdots z_n0$, truncates which results in $Z_{\overline{n+2}}=z_0\cdots z_n$. Again, from Theorem \ref{PBb-1} it follows that the Ball number associated with the number $x_{n+2}$ is $99 \times Z_{\overline{n+2}}= 99 \times 10\times Z_ {\overline{n}}=10B_0$.
\end{proof}

From Proposition \ref{PBD} that
\begin{corollary}
\label{C10C}

If $z_0\cdots z_n$ is a code, then $z_0\cdots z_n0$ is an extended code.
\end{corollary}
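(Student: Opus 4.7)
The plan is to derive the corollary as a direct consequence of Proposition \ref{PBD}, using only the observation that a code is always realized by some number of the appropriate form and that leading zeros disappear under numerical interpretation.

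First, I would pick a number $x_{n+1}=a_n a_{n-1}\cdots a_0$ whose code is the given string $z_0\cdots z_n$. Such an $x_{n+1}$ exists because the hypotheses of Definition \ref{Dcod} and Proposition \ref{Pcod} force $z_0=1$ and $z_n=0$ and, together with the symmetry constraints of Proposition \ref{simetria}, guarantee a compatible choice of digits with $a_n>a_0$. Next, I would fix any nonzero digit $a$ and form the $(n+3)$-digit number $\tilde{x}=a\,a_n a_{n-1}\cdots a_0\,a$, whose outermost digits now coincide. This places $\tilde{x}$ exactly in the setting of Remark \ref{est} with parameter $j=1$, so applying Proposition \ref{PBD} (with $n$ replaced by $n+1$) gives that the extended code associated with $\tilde{x}$ is obtained from the code of $x_{n+1}$ by adjoining a single $0$ at each end, producing the string $0\,z_0 z_1 \cdots z_n\,0$.

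Finally, I would interpret this string as a base-$10$ numeral. The prepended $0$ is a non-significant leading zero, so as a number the extended code equals $z_0 z_1 \cdots z_n 0$, an $(n+2)$-digit string whose only added entry relative to the original code is a trailing $0$. This is exactly the assertion of Corollary \ref{C10C}.

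The only subtle point I anticipate is the notational mismatch between Remark \ref{est}, which describes extended codes as symmetric strings with matching zero blocks on both sides, and the corollary, which presents the extension asymmetrically with a single trailing zero. I would handle this by being explicit that ``extended code'' is used here in the numerical sense: the leading zero produced by Proposition \ref{PBD} is suppressed when the digit string is read as an integer, while the trailing zero (at index $n+1$) is retained because it sits in a non-leading position. Once this convention is made explicit, the corollary requires no further argument.
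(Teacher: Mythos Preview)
Your proposal is correct and follows exactly the route the paper intends: the paper's own proof is the single line ``From Proposition \ref{PBD},'' and you have merely unpacked that line by exhibiting a realizing number $x_{n+1}$, appending equal outer digits, and reading off the resulting extended code. Your explicit treatment of the leading-zero convention is a genuine clarification the paper glosses over, but it does not change the argument.
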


\section{Reverse divisor}
\label{SDR}

This section discusses the  relationship between this number and its permutations.

\begin{definition}[permulpiple]
\label{Dper}

We call a number and one of its permutations \textit{permultiple} if one is a multiple of the other.
\end{definition}

\begin{example}[\cite{gugu}]
\label{E142857} 
The number $142857$ when multiplied by two, three, four, five, and six, results in another number with the same digits cyclically permuted:
\begin{eqnarray*}
142857 \cdot 2 \ &= \ 285714 \\ 
142857 \cdot 3\  &= \ 428571 \\ 
142857 \cdot 4 \ &= \ 571428 \\ 
142857 \cdot 5 \ &= \ 714285 \\ 
142857 \cdot 6 \ &= \ 857142 .
\end{eqnarray*}
 
Therefore, a pair of numbers formed by $142857$ and any one of the list: $285714$, $428571$, $571428$, $714285$ or $857142$ is an example of a \textit{permultiple} number.
\end{example}

\begin{example}[\cite{Holt, Holt2, webs2}]
\label{EP} 

 Numbers $1089$ and $9801$ are 
 \textit{permultiple} numbers, because $9801=9\cdot 1089$. Similarly, $10989$ and $98901$ are \textit{permultiple} numbers because $98901 = 9 \cdot 10989$.
\end{example}

\begin{example}[\cite{Augusto}]
\label{ET} 

The numbers $102564$ and $410256$ are also \textit{permultiple}, as $410256=4 \cdot 102564$.
\end{example}

Holt\cite{Holt, Holt2} studied several properties of a class of \textit{permultiple} numbers, the \textit{palintuples} (palindromes and multiples), in base $10$, as in Example~\ref{EP}; we presented examples in other bases. Holt\cite{Holt2} used the name \textit{permultiple} and is a juxtaposition of permutations and multiple words. The palindromic number is equal to the reverse. 
\textit{Palintuple} numbers are also known as \textit{reverse divisors} because their digits are arranged in reverse order.

In this section, we present (and study) a subclass of \textit{permultiple} numbers: \textit{palintuple} or \textit{reverse divisors}. In addition to definition of permultiple, 

\begin{definition}[reverse divisor]
\label{Ddr}

Let $x_n$ be a non-palindromic integer with $n\geq 2$ digits. We say that $x_n$ is a reverse divisor if there is an integer $1< k <10$ such that $x_n'=k\cdot x_n$; that is, $x_n$ divides its reverse $x_n'$. In this case, $x_n'$ is an inverse multiple of $x_n$.
\end{definition}

We often say that $x_n$ and $x_n'$ are the reverse divisors.

\begin{example}[\cite{Holt, Holt2, webs2}]
\label{Ewebs} 

The number $1089$ is a reverse divisor in the permultiple class because it divides the number $9801$, because $9801=9\cdot 1089$. Likewise, the numbers $10989$ and $98901$ are reverse divisors because $98901 = 9 \cdot 10989$.
\end{example}

We have

\begin{theorem}[\cite{webs2}]
\label{Tnm}
The value $10\underbrace{99\ldots 99}_{n-4 \mbox{ times}}89$ with $n>3$ is the reverse divisor of $98\underbrace{99\ldots 99}_{n-4\mbox{ times} }01$.
\end{theorem}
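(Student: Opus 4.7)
The plan is to work directly with closed-form expressions for both numbers in terms of powers of $10$, and then to verify the divisibility relation by a single algebraic identity.

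First, I would write $A := 10\underbrace{99\ldots 99}_{n-4}89$ in base $10$ as
\[
A = 10^{n-1} + \sum_{i=2}^{n-3} 9 \cdot 10^{i} + 8 \cdot 10 + 9.
\]
Summing the geometric series $\sum_{i=2}^{n-3} 9 \cdot 10^{i} = 10^{n-2} - 10^{2}$ collapses this to the clean form
\[
A = 10^{n-1} + 10^{n-2} - 11.
\]
A sanity check at $n=4$ gives $A = 1000 + 100 - 11 = 1089$ and at $n=5$ gives $A = 10000 + 1000 - 11 = 10989$, matching Example~\ref{Ewebs}.

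Second, I would read the digits of $A$ from right to left to produce $A'$. Reversing $1,0,\underbrace{9,\ldots,9}_{n-4},8,9$ yields $9,8,\underbrace{9,\ldots,9}_{n-4},0,1$, which is exactly $98\underbrace{99\ldots 99}_{n-4}01$; so $A'$ does have the claimed shape. Applying the same geometric-series computation to $A'$ gives
\[
A' = 9 \cdot 10^{n-1} + 8 \cdot 10^{n-2} + \sum_{i=2}^{n-3} 9 \cdot 10^{i} + 1 = 9 \cdot 10^{n-1} + 9 \cdot 10^{n-2} - 99.
\]

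Third, I would multiply the closed form for $A$ by $9$ to obtain $9A = 9\cdot 10^{n-1} + 9\cdot 10^{n-2} - 99$, which coincides with the expression for $A'$ above. Hence $A' = 9 \cdot A$ with $1 < 9 < 10$, so by Definition~\ref{Ddr}, $A$ is a reverse divisor of $A'$, which is the claim.

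The only real obstacle is making the index bounds on $\sum_{i=2}^{n-3}$ consistent in the degenerate case $n=4$, where the sum is empty; writing out that case separately (or interpreting the empty sum as zero, so that $A = 10^{3} + 89 = 1089$) cleanly absorbs the edge case. Everything else is bookkeeping in the geometric sum.
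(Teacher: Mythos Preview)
Your proof is correct. The closed forms $A = 10^{n-1} + 10^{n-2} - 11$ and $A' = 9\cdot 10^{n-1} + 9\cdot 10^{n-2} - 99$ are right (and your handling of the empty sum at $n=4$ is fine), so the single line $9A = A'$ finishes the job.

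The paper takes a different route: it proves by induction on $n$ first that $11(10^{n-2}-1) = 10\underbrace{99\ldots 99}_{n-4}89$ (Proposition~\ref{Pnm}) and then, again by induction, that $9$ times this number equals $98\underbrace{99\ldots 99}_{n-4}01$ (Proposition~\ref{Pnr}), before assembling the two facts. Your argument is shorter and more direct, replacing two inductions with one geometric-series evaluation and an algebraic identity; in fact your closed form $10^{n-1}+10^{n-2}-11$ is exactly $11(10^{n-2}-1)$, so you recover the paper's Proposition~\ref{Pnm} for free. What the paper's approach buys is the explicit factored form $11(10^{n-2}-1)$ displayed as a standalone proposition, which it reuses later (e.g.\ in Section~\ref{SDRM} and in the proof of Theorem~\ref{Tdnm}); your approach reaches the theorem faster but leaves that intermediate identity implicit.
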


\begin{example}[\cite{Holt, Holt2, webs2}]
\label{Exdnm}

The numbers $21978$ and $87912$ are reverse divisors, because $87912 = 4 \cdot 21978$. Additionally, $2178$ and $8712$ are reverse divisors.
\end{example}

We also have

\begin{theorem}[\cite{webs2}]
\label{Tdnm}  The value
$21\underbrace{99\ldots 99}_{n-4 \mbox{ times}}78$, where $n>3$ is a reverse divisor of $87\underbrace{99\ldots 99}_{n -4\mbox{ times} }12$.
\end{theorem}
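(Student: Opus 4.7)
My plan is to establish the identity by finding a clean closed form for both $x_n = 21\underbrace{99\ldots 99}_{n-4}78$ and its reverse $x_n' = 87\underbrace{99\ldots 99}_{n-4}12$, and then simply checking that the quotient is an integer strictly between $1$ and $10$, so that Definition \ref{Ddr} applies.

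First I would split $x_n$ according to its three blocks. Writing out $x_n$ in base $10$ gives
\[
x_n = 2\cdot 10^{n-1} + 10^{n-2} + 9\sum_{i=2}^{n-3}10^i + 7\cdot 10 + 8,
\]
and the middle geometric series is $9\sum_{i=2}^{n-3}10^i = 10^{n-2}-10^2$. Substituting and collecting terms, the $78$ and the $-100$ combine to $-22$ while the two leading contributions combine to $2\cdot 10^{n-1}+2\cdot 10^{n-2}=22\cdot 10^{n-2}$, giving the compact form
\[
x_n \;=\; 22\bigl(10^{n-2}-1\bigr).
\]
A quick sanity check confirms this: $n=4$ yields $22\cdot 99 = 2178$ and $n=5$ yields $22\cdot 999 = 21978$.

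Next I would do the same reduction for $x_n'$. The analogous expansion produces $8\cdot 10^{n-1} + 8\cdot 10^{n-2} - 88$, so
\[
x_n' \;=\; 88\bigl(10^{n-2}-1\bigr).
\]
Now the theorem is immediate: $x_n' = 4\cdot 22(10^{n-2}-1) = 4\,x_n$, and since $k=4$ satisfies $1<k<10$, Definition \ref{Ddr} shows that $x_n$ is a reverse divisor of $x_n'$.

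The only real obstacle is the elementary bookkeeping in turning the geometric series for the nines into a closed form, which requires some care with the index range on the sum (the nines occupy positions $2$ through $n-3$, not $0$ through $n-1$). Once the two expressions $22(10^{n-2}-1)$ and $88(10^{n-2}-1)$ are in hand, the factor-of-$4$ relationship is visible by inspection, so no further argument is needed. (This also recovers Theorem~\ref{Tnm} as a parallel case, where the analogous closed forms $10(10^{n-2}-1)+(-1+90)=\ldots$ give a factor of $9$ instead of $4$.)
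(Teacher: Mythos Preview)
Your proof is correct. Both your argument and the paper's rest on the same closed form $x_n = 22(10^{n-2}-1)$ (equivalently $2\cdot 11\cdot(10^{n-2}-1)$), and both then exhibit the reverse as $4x_n$. The difference is purely in how that closed form is obtained: the paper establishes it by two separate inductions on $n$ (Propositions~\ref{Pdnm} and~\ref{Pdnr}), whereas you sum the block of nines as a finite geometric series and collapse the expression in one stroke, doing the same for the reverse to get $88(10^{n-2}-1)$ directly. Your route is shorter and avoids the inductive bookkeeping; the paper's inductive version has the minor advantage of making the $n\to n+1$ digit pattern explicit (appending a $9$ in the middle), which ties more visibly to the decimal representation. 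Either way the content is the same, and your sanity checks at $n=4,5$ already cover the edge case where the geometric sum is empty.
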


We present the proofs of Theorems~\ref{Tnm} and ~\ref{Tdnm} in Subsections below. 
Webster \cite{webs2} and others show that the two classes indicated in the above theorems contain all reverse divisors.

First, however, we call attention to the fact that the examples of reverse divisors presented in Examples~\ref{Ewebs} and~\ref{Exdnm} have more than $4$ digits.

The following two results guarantee no reverse divisors with $2$ or $3$ digits.

\begin{proposition}
There was no reverse divisor with $2$ digits.
\end{proposition}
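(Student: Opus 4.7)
The plan is to let $x_2 = 10a + b$ with $a \in \{1, \ldots, 9\}$, $b \in \{0, \ldots, 9\}$ and $a \neq b$ (enforcing a true two-digit non-palindromic number), and assume for contradiction that there exists an integer $k$ with $2 \leq k \leq 9$ such that $10b + a = k(10a + b)$. Rearranging, this becomes the single Diophantine relation
\[
b(10 - k) = a(10k - 1),
\]
and my strategy is to show it is incompatible with $a \geq 1$ and $b \leq 9$ for every admissible $k$.

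The first step is a crude size bound that immediately disposes of $k \in \{8, 9\}$: since $a \geq 1$, the right-hand side is at least $10k - 1 \geq 79$, while the left-hand side is at most $9(10 - k) \leq 18$, a contradiction.

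For the remaining cases $k \in \{2, 3, 4, 5, 6, 7\}$, I would set $d = \gcd(10-k,\, 10k-1)$ and exploit the algebraic identity $10(10-k) + (10k-1) = 99$, which gives $d \mid 99$ and hence $d \mid \gcd(10-k, 99)$. A quick computation yields $d \leq 3$ for every such $k$. Dividing the displayed equation by $d$ produces coprime coefficients on the two sides, so $(10k-1)/d$ must divide $b$. A one-line case check shows $(10k-1)/d \geq 13$ in every case, which combined with $0 \leq b \leq 9$ forces $b = 0$, and hence $a = 0$, contradicting $a \geq 1$.

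I do not expect any real obstacle. Once the identity $10(10-k) + (10k-1) = 99$ is noticed, the proof collapses into a finite check whose arithmetic is essentially trivial; the only ``difficulty'' is the minor bookkeeping of six gcd values. A more pedestrian alternative is simply to enumerate all eight values of $k$ individually and verify non-existence, but the gcd argument packages them uniformly and avoids tedium.
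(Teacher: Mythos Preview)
Your proof is correct, but the paper takes a slightly slicker route that avoids both the size bound for $k\in\{8,9\}$ and the six-way gcd check. Writing $b=a+c$ with $c\neq 0$ (the paper also assumes $b>a$, so $1\le c\le 8$), the relation $10b+a=k(10a+b)$ rearranges in one line to
\[
(10-k)\,c \;=\; 11\,a(k-1).
\]
Since $11$ is prime while $1\le 10-k\le 8$ and $1\le c\le 8$, neither factor on the left is divisible by $11$, an immediate contradiction. In effect, your identity $10(10-k)+(10k-1)=99$ and the paper's identity $10k-1=11(k-1)+(10-k)$ encode the same arithmetic, but by subtracting $a$ from $b$ first the paper isolates the prime $11$ directly and handles all eight values of $k$ uniformly, with no gcd bookkeeping or separate size argument. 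Your approach, on the other hand, is a bit more robust in that it does not rely on first noticing the substitution $c=b-a$; once the gcd identity is in hand, the rest is mechanical.
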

\begin{proof}

We assume that  $x_2=ab$, where $a,b \in \{ 1, 2,3,4,5,6,7,8,9\}$ is a reverse divisor. Without loss of generality, we can assume $b>a$, that is, $b=a+c$ with $c\not=0$. So we have 
$x_2=10\cdot a + b \ \mbox{ and } \ x_2'=10\cdot b + a \ .$

Because $x_2$ is a divisor of $x_2'$, there exists an integer $1<k<10$ such that
\[
k=\dfrac{10\cdot b + a}{10\cdot a + b}=\dfrac{10\cdot b+100\cdot a -99\cdot a}{10\cdot a + b}=10- \dfrac{99\cdot a}{10\cdot a+b}=10-\dfrac{99\cdot a}{11\cdot a+c}.
\]
So,
\[
10-k=\dfrac{99\cdot a}{11\cdot a+c} \,\,\, \ \Rightarrow \,\,\,  (10-k)c=11\bigl( 9a-a(10-k)\bigr) \ .
\]
Then, $11$ divides $10-k$ or $11$ divides $c$. Contradiction.

 Therefore, $x_2$ is not the reverse divisor.

\end{proof}

In the same way

\begin{proposition}
There was no reverse divisor with $3$ digits.
\end{proposition}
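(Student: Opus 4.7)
The plan is to suppose, for contradiction, that $x_3 = 100a + 10b + c$ is a three-digit reverse divisor, with $x_3' = 100c + 10b + a = k\,x_3$ for some integer $k$ satisfying $1 < k < 10$, and to derive a contradiction paralleling the two-digit argument just given. Comparing magnitudes forces $c > a \geq 1$, and subtracting $x_3$ from $k\,x_3 = x_3'$ yields the central identity
\[
(k-1)\,x_3 \;=\; 99\,(c - a).
\]

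My first move is to recycle the mod-$11$ trick used in the preceding proof: because $100 \equiv 1$ and $10 \equiv -1 \pmod{11}$, both $x_3$ and $x_3'$ reduce to the alternating digit sum $a - b + c$ modulo $11$, so $(k-1)\,x_3 \equiv 0 \pmod{11}$, and since $\gcd(k-1, 11) = 1$ for $k \in \{2, \ldots, 9\}$, this forces $11 \mid x_3$. The resulting congruence $a - b + c \equiv 0 \pmod{11}$ together with the digit bounds leaves exactly two sub-cases: $b = a + c$ (so $a + c \leq 9$), or $b = a + c - 11$ (so $a + c \geq 11$).

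In the first sub-case, $x_3$ factors as $11(10a + c)$ and $x_3' = 11(10c + a)$, so the ratio $k = (10c + a)/(10a + c)$ being an integer would exhibit the two-digit number $\overline{ac}$ as a reverse divisor of $\overline{ca}$, directly contradicting the preceding proposition. In the second sub-case the identity collapses to $(k-1)(10a + c - 10) = 9(c - a)$; the joint constraints $a + c \geq 11$, $c > a$, and $9(c-a) \geq 10a + c - 10$ (required for $k-1 \geq 1$) shrink the admissible pairs $(a,c)$ to the short list $\{(2,9),\,(3,8),\,(3,9),\,(4,9)\}$, and a one-line substitution confirms that $9(c-a)/(10a+c-10)$ is non-integral in each of these four pairs.

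The principal obstacle is the second sub-case, which does not reduce cleanly to the already-handled two-digit result in the way the first sub-case does; fortunately, the feasibility region shrinks to just four pairs, so a brief finite verification suffices to close the argument.
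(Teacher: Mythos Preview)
Your argument is correct, and it takes a genuinely different route from the paper's.

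Both proofs begin identically: from $x_3' = k\,x_3$ one obtains $(k-1)\,x_3 = 99(c-a)$, and since $1\le k-1\le 8$ is coprime to $11$, necessarily $11\mid x_3$. From here the paper pushes further with the factor $9$: since $9\nmid(k-1)$ either, at least one factor of $3$ must sit in $x_3$, giving $33\mid x_3$. The paper then finishes by \emph{inspecting all three-digit multiples of $33$} (roughly $27$ candidates) and observing that none is a reverse divisor.

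You instead drop the factor $3$ entirely and exploit the congruence $a-b+c\equiv 0\pmod{11}$ structurally. Your first sub-case $b=a+c$ factors as $x_3=11\cdot\overline{ac}$ and $x_3'=11\cdot\overline{ca}$, so the question collapses to the two-digit proposition already proved---a pleasing reduction the paper does not make. Your second sub-case $b=a+c-11$ does require a finite check, but the inequalities $a+c\ge 11$, $c>a$, and $9(c-a)\ge 10a+c-10$ prune the search to just four pairs, far fewer than the paper's inspection.

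In short: the paper trades a slightly stronger divisibility conclusion ($33\mid x_3$) for a larger brute-force verification, whereas you trade a weaker one ($11\mid x_3$) for a structural case split that recycles the preceding proposition and leaves only four residual checks. Your approach is tidier and makes the dependence on the two-digit result explicit.
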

\begin{proof}

Let $ a,b, c,d \in \{1, 2,3,4,5,6,7,8,9\}$ such that the number $x_3=abc$ is a reverse divisor with $c= a+d$. We have 
$
x_3=100\cdot a + 10\cdot b +c \ \mbox{ and } \ x_3'=100\cdot c + 10 \cdot b + a \ .
$\\
Because $x_3$ is a divisor of $x_3'$, there exists an integer $1<k<10$ such that
\[
k=\dfrac{100\cdot c +10\cdot b + a}{100\cdot a +10\cdot b + c}= \dfrac{100\cdot (a+d) +10\cdot b + (c-d )}{100\cdot a +10\cdot b + c}=1+\dfrac{99\cdot d}{100\cdot a +10\cdot b + c}\ ,
\]
that is,
\[
k-1=\dfrac{99\cdot d}{abc} ,\ \mbox{ so } \ (k-1)\cdot abc=9 \cdot 11\cdot d \ .
\]

Because $k-1<9$, it follows that neither $11$ nor $9$ divide $k-1$; therefore, $11$ and $3$ must divide $abc$ because $k-1$ can be a multiple of $3$. By inspection, we find no reverse divisors between multiples of $33$.

 Therefore, $x_3$ is not the reverse divisor.

\end{proof}

\subsection{Proof of Theorem~\ref{Tnm}}
\label{Snm}

Theorem~\ref{Tnm} is a direct consequence of the following result.

\begin{proposition}
\label{Pnm}

For every natural $n>3$ we obtain $
11\times(10^{n-2}-1)=10\underbrace{99\ldots 99}_{n-4 \mbox{ times}}89 \ .
$
\end{proposition}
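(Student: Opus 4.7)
The claim is an identity about decimal digit patterns, so my plan is to reduce both sides to the same closed expression in powers of $10$ and compare.

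First, I would expand the left-hand side using $11 = 10 + 1$:
\begin{equation*}
11 \times (10^{n-2} - 1) = (10+1)\cdot 10^{n-2} - 11 = 10^{n-1} + 10^{n-2} - 11.
\end{equation*}
Next, I would decode the right-hand side positionally. The target number $10\underbrace{99\ldots 99}_{n-4}89$ has $n$ digits; the leading block \textquotedblleft$10$\textquotedblright{} contributes $10^{n-1}$, the middle block of $n-4$ nines occupies positions $2$ through $n-3$ and so contributes $(10^{n-4}-1)\cdot 10^2$, and the trailing block \textquotedblleft$89$\textquotedblright{} contributes $89$. Summing these,
\begin{equation*}
10^{n-1} + (10^{n-4} - 1)\cdot 10^2 + 89 = 10^{n-1} + 10^{n-2} - 100 + 89 = 10^{n-1} + 10^{n-2} - 11,
\end{equation*}
which agrees with the expanded left-hand side.

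There is essentially no obstacle beyond bookkeeping; the only subtle point is the boundary case $n = 4$, where $10^{n-4} - 1 = 0$ makes the middle block of nines empty, and the formula correctly collapses to $10^{3} + 89 = 1089$. As a cross-check, an induction on $n$ also works: assuming the identity for $n$, one multiplies by $10$ and adds $99$ to get the identity for $n+1$, since $10\cdot 11(10^{n-2} - 1) + 99 = 11(10^{n-1} - 1)$ on the left, while on the right shifting the digit string left and adjusting the last two digits from $90$ back to $989$ inserts exactly one more $9$ into the central block.
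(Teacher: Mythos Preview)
Your proof is correct. The direct positional expansion reducing both sides to $10^{n-1}+10^{n-2}-11$ is clean and complete, and your handling of the boundary $n=4$ is right.

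The paper takes the route you relegate to a cross-check: it proceeds by induction on $n$, using the recurrence $11(10^{(k+1)-2}-1)=10\cdot 11(10^{k-2}-1)+99$ and then reading off the effect on the digit string. Your primary argument is genuinely different---a one-line algebraic verification rather than an inductive build-up. It is shorter and avoids tracking how the digit pattern evolves step by step; the induction, on the other hand, makes the ``insert one more $9$'' mechanism explicit and dovetails with the parallel inductions the paper runs for the companion identities (Propositions~\ref{Pnr}, \ref{Pdnm}, \ref{Pdnr}), so it fits the surrounding exposition more uniformly even if it is less economical for this single statement.
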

\begin{proof}

We based the proof  on the induction of $n$. For $n=4$ we have
$11\times (10^2-1) = 1089$.

We assume that the result is true for some natural $k>4$; that is,
\begin{equation}
\label{Enm}
11\times(10^{k-2}-1)=10\underbrace{99\ldots 99}_{k-4 \mbox{ times}}89 \ .
\end{equation}
For $k+1$, we have
\begin{align}
\nonumber 11\times(10^{(k+1)-2}-1) & =  11\times(10\times 10^{k-2}-10+10-1)&\\
\label{Enm1} & =  11\times[(10\times (10^{k-2}-1)+(10-1)]&\\
\nonumber & =  10\times(11\times (10^{k-2}-1)+11\times 9 \ .&
\end{align}
By using the induction hypothesis, that is, Equation~\eqref{Enm} in Equation~\eqref{Enm1}, we have
\begin{eqnarray*}
11\times(10^{(k+1)-2}-1) & = & 10\times(10\underbrace{99\ldots 99}_{k-4 \mbox{ times}}89)+11\times 9\\
&=& 10\underbrace{99\ldots 99}_{k-4 \mbox{ times}}890+99\\
&=& 10\underbrace{99\ldots 99}_{k-4 \mbox{ times}}989\\
&=& 10\underbrace{99\ldots 99}_{(k+1)-4 \mbox{ times}}89\ .
\end{eqnarray*}
Which completes the proof.

\end{proof}

\begin{proposition}
\label{Pnr}

For every natural $n>3$ we obtain $
9\times 10\underbrace{99\ldots 99}_{n-4 \mbox{ times}}89 = 98\underbrace{99\ldots 99}_{n-4 \mbox{ times}}01 \ .$
\end{proposition}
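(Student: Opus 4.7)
The plan is to reduce the identity to a closed-form computation by leveraging Proposition~\ref{Pnm}, rather than running a fresh induction on $n$. Proposition~\ref{Pnm} already gives
\[
10\underbrace{99\ldots 99}_{n-4 \text{ times}}89 \;=\; 11\bigl(10^{n-2}-1\bigr),
\]
so multiplying both sides by $9$ yields
\[
9\times 10\underbrace{99\ldots 99}_{n-4 \text{ times}}89 \;=\; 99\bigl(10^{n-2}-1\bigr) \;=\; 99\cdot 10^{n-2}-99.
\]
It therefore suffices to show that $98\underbrace{99\ldots 99}_{n-4 \text{ times}}01=99\cdot 10^{n-2}-99$.

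For the right-hand side I would expand the $n$-digit number by place value, splitting it into three blocks: the leading pair $98$ (contributing $98\cdot 10^{n-2}$), the middle string of $n-4$ nines (contributing $(10^{n-4}-1)\cdot 10^{2}$, interpreted as $0$ when $n=4$), and the trailing pair $01$ (contributing $1$). Adding these and simplifying gives
\[
98\cdot 10^{n-2}+\bigl(10^{n-4}-1\bigr)\cdot 100+1 \;=\; 98\cdot 10^{n-2}+10^{n-2}-100+1 \;=\; 99\cdot 10^{n-2}-99,
\]
which matches the left-hand side, completing the argument.

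The only real bookkeeping obstacle is the boundary case $n=4$, where the middle block of nines is empty; here the formula $10^{n-4}-1=0$ still evaluates correctly, so no separate base case is required (and one can sanity-check by noting $9\cdot 1089=9801$). An alternative route is to mirror the induction used in Proposition~\ref{Pnm}, starting from $n=4$ and passing from $n$ to $n+1$ via multiplication by $10$ and a correction term, but that approach duplicates the work already done in Proposition~\ref{Pnm}, whereas the above reduction uses Proposition~\ref{Pnm} as a black box and finishes in a few lines of arithmetic.
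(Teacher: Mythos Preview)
Your proof is correct. Both you and the paper begin by invoking Proposition~\ref{Pnm} to replace the left-hand side by $99(10^{n-2}-1)$, but from there the paper runs an induction on $n$: the inductive step writes $99(10^{k-1}-1)=10\cdot 99(10^{k-2}-1)+891$, applies the hypothesis to get $98\underbrace{99\ldots99}_{k-4}010+891$, and checks that this equals $98\underbrace{99\ldots99}_{(k+1)-4}01$. Your approach instead verifies the identity $98\underbrace{99\ldots99}_{n-4}01=99\cdot 10^{n-2}-99$ directly via a place-value expansion, which is shorter and handles all $n>3$ uniformly (including the empty-middle case $n=4$) without a separate base step. The induction buys nothing extra here; your reduction is the cleaner argument, and as you note it avoids duplicating the work already packaged in Proposition~\ref{Pnm}.
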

\begin{proof}

We based the proof on the induction of $n$. For $n=4$ we have
$9\times 1089 = 9801$.

From Proposition~\ref{Pnm} we have $11\times(10^{n-2}-1)=10\underbrace{99\ldots 99}_{n-4 \mbox{ times}}89$. Now, we admit that the result is valid for some natural $k>4$; that is,
\begin{equation}
\label{Enr}
9\times 11\times(10^{k-2}-1) = 98\underbrace{99\ldots 99}_{k-4 \mbox{ times}}01 \ .
\end{equation}
For $k+1$, we have
\begin{align}
\nonumber 9\times 11\times(10^{(k+1)-2}-1) & =  9\times 11\times(10\times 10^{k-2}-10+10-1)& \\
\label{Enr1} & =  9\times11\times[(10\times (10^{k-2}-1)+(10-1)]&\\
 \nonumber& =  10\times9\times(11\times (10^{k-2}-1)+9\times11\times 9\ . &
\end{align}
Using the induction hypothesis, Equation~\eqref{Enr}, in Equation~\eqref{Enr1}, we have
\begin{eqnarray*}
9\times 11\times(10^{(k+1)-2}-1) & = & 10\times(98\underbrace{99\ldots 99}_{k-4 \mbox{ times}}01) +9\times 11\times 9\\
&=& 98\underbrace{99\ldots 99}_{k-4 \mbox{ times}}010+891\\
&=& 98\underbrace{99\ldots 99}_{k-4 \mbox{ times}}901\\
&=& 98\underbrace{99\ldots 99}_{(k+1)-4 \mbox{ times}}901\ .
\end{eqnarray*}
Which completes the proof.

\end{proof}

Let us look into
\begin{proof}{\it of Theorem~\ref{Tnm}}\\
For $n=4$,  $9801=9\cdot 1089$ (Example~\ref{Ewebs}).

For $n>4$ let $Y=11\times(10^{n-2}-1)$: from Proposition~\ref{Pnm}, we have $Y=10\underbrace{99\ldots 99}_ {n-4 \mbox{ times}}89$. Because $Y'=98\underbrace{99\ldots 99}_{n-4 \mbox{ times}}01$, it follows from Proposition~\ref{Pnr} that $Y'=9\cdot Y$.

\end{proof}

\subsection{Proof of Theorem~\ref{Tdnm}}
\label{Sdnm}

The proof of Theorem~\ref{Tdnm} is a direct consequence of the next results.

\begin{proposition}
\label{Pdnm}

For every natural $n>3$ we obtain $
2\times 11\times(10^{n-2}-1)=21\underbrace{99\ldots 99}_{n-4 \mbox{ times}}78 \ .$
\end{proposition}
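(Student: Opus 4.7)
The plan is to mirror the inductive proof of Proposition~\ref{Pnm} exactly, since the statement has the same flavor: the quantity $2\times 11\times(10^{n-2}-1)$ differs from $11\times(10^{n-2}-1)$ only by an overall factor of $2$, and we already know the latter equals $10\underbrace{99\ldots 99}_{n-4}89$. So induction on $n$ should go through with essentially the same algebraic manipulations.

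For the base case $n=4$, I would just compute directly: $2\times 11\times 99 = 22\times 99 = 2178$, which matches $21\underbrace{}_{0}78 = 2178$. For the inductive step, assuming the identity holds at some $k>3$, I would write
\begin{align*}
2\times 11\times(10^{(k+1)-2}-1) &= 2\times 11\times(10\cdot 10^{k-2}-10+10-1)\\
 &= 2\times 11\times\bigl[10\cdot (10^{k-2}-1) + 9\bigr]\\
 &= 10\cdot\bigl(2\times 11\times(10^{k-2}-1)\bigr) + 2\times 11\times 9,
\end{align*}
which is the same algebraic rearrangement used in Proposition~\ref{Pnm}. Then I would substitute the induction hypothesis into the first summand and compute the final arithmetic.

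The only step that requires a bit of care is the carry analysis at the end of the inductive step: after substitution we get $10\times 21\underbrace{99\ldots 99}_{k-4}78 + 198 = 21\underbrace{99\ldots 99}_{k-4}780 + 198$, and I need to verify that this sum equals $21\underbrace{99\ldots 99}_{(k+1)-4}78$. This reduces to the trivial observation that $780+198=978$, so the trailing block changes from $780$ to $978$, which is exactly one extra $9$ prepended to the $78$ suffix, raising the count of nines from $k-4$ to $k-3 = (k+1)-4$. That is the entire argument; no serious obstacle arises beyond matching the bookkeeping of the nines, just as in Proposition~\ref{Pnm}.
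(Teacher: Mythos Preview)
Your proof is correct and is essentially identical to the paper's own argument: the paper also proceeds by induction on $n$, verifies $2\times 11\times 99=2178$ for the base case, rewrites $2\times 11\times(10^{(k+1)-2}-1)=10\bigl[2\times 11\times(10^{k-2}-1)\bigr]+2\times 11\times 9$, and then uses the hypothesis together with $780+198=978$ to obtain the extra nine. Nothing is missing.
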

\begin{proof}

We again apply induction on $n$. For $n=4$ we have $2\times 11\times (10^2-1) = 2178$.

From Proposition~\ref{Pnm} we have $11\times(10^{n-2}-1)=10\underbrace{99\ldots 99}_{n-4 \mbox{ times}}89$, and admit that the result is valid for some natural $k>4$; that is,
\begin{equation}
\label{Ednm}
2\times 11\times(10^{k-2}-1)=2\times 10\underbrace{99\ldots 99}_{k-4 \mbox{ times}}89 = 21\underbrace{99\ldots 99}_{k-4 \mbox{ times}}78 \ .
\end{equation}

For $k+1$, we have

\begin{eqnarray}
\nonumber 2\times 11\times(10^{(k+1)-2}-1) & = & 2\times 11\times(10\times 10^{k-2}-10+10-1)\\
\label{Ednm1}   & = & 10 [2\times(11\times (10^{k-2}-1)]+2\times 11\times 9 \ .    
\end{eqnarray}

Using the induction hypothesis, Equation~\eqref{Ednm} in Equation~\eqref{Ednm1}, we have

\begin{eqnarray*}
2\times 11\times(10^{(k+1)-2}-1) & = & 10[ 2\times 10\underbrace{99\ldots 99}_{k-4 \mbox{ times}}89]+2\times 11\times 9\\
&=& 21\underbrace{99\ldots 99}_{k-4 \mbox{ times}}780+198\\
&=& 21\underbrace{99\ldots 99}_{k-4 \mbox{ times}}978\\
&=& 21\underbrace{99\ldots 99}_{(k+1)-4 \mbox{ times}}78\ .    
\end{eqnarray*}

Which completes the proof.
\end{proof}

\begin{proposition}
\label{Pdnr}

For every natural $n>3$ we obtain $
4\times 21\underbrace{99\ldots 99}_{n-4 \mbox{ times}}78 = 87\underbrace{99\ldots 99}_{n-4 \mbox{ times}}12 \ .
$
\end{proposition}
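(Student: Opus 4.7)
The plan is to mirror the inductive arguments used for Proposition~\ref{Pnr} and Proposition~\ref{Pdnm}, exploiting the algebraic identity for $21\underbrace{99\ldots 99}_{n-4}78$ already established. The crucial observation is that, by Proposition~\ref{Pdnm}, the left-hand side can be rewritten as
\[
4\times 21\underbrace{99\ldots 99}_{n-4 \mbox{ times}}78 \;=\; 4\times 2\times 11\times (10^{n-2}-1) \;=\; 8\times 11\times (10^{n-2}-1),
\]
so the whole statement reduces to showing $8\times 11\times (10^{n-2}-1) = 87\underbrace{99\ldots 99}_{n-4}12$ for all $n>3$, which I would carry out by induction on $n$.

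For the base case $n=4$, I would simply verify $4\times 2178 = 8712$. For the inductive step, assuming the result holds at some $k>4$, I would split $10^{(k+1)-2}-1 = 10\times(10^{k-2}-1) + 9$ exactly as in Proposition~\ref{Pdnm}, obtaining
\[
8\times 11\times (10^{(k+1)-2}-1) \;=\; 10\bigl[8\times 11\times (10^{k-2}-1)\bigr] + 8\times 11\times 9.
\]
Applying the induction hypothesis to the bracketed term and computing $8\times 11\times 9 = 792$ gives
\[
10\times 87\underbrace{99\ldots 99}_{k-4 \mbox{ times}}12 + 792 \;=\; 87\underbrace{99\ldots 99}_{k-4 \mbox{ times}}120 + 792.
\]

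The only delicate step is the final digit-level addition $120 + 792 = 912$, which lengthens the block of $9$'s by one without touching the leading $87$ or the trailing $12$, yielding $87\underbrace{99\ldots 99}_{(k+1)-4 \mbox{ times}}12$ as required. I expect this to be the main (though very minor) obstacle, just as the analogous checks $890+99=989$ in Proposition~\ref{Pnm} and $780+198=978$ in Proposition~\ref{Pdnm} were the technical pivots there; once it is in place, the induction closes immediately and Theorem~\ref{Tdnm} follows by combining Proposition~\ref{Pdnm} and this proposition in the same manner that Theorem~\ref{Tnm} was deduced from Propositions~\ref{Pnm} and~\ref{Pnr}.
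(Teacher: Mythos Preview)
Your proposal is correct and follows essentially the same route as the paper's proof: both invoke Proposition~\ref{Pdnm} to rewrite the left-hand side as $8\times 11\times(10^{n-2}-1)$, verify the base case $4\times 2178=8712$, split $10^{(k+1)-2}-1=10(10^{k-2}-1)+9$ in the inductive step, apply the hypothesis, and finish with the digit check $120+792=912$. There is nothing to add.
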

\begin{proof}

Again, we will do this is performed by inducing  on $n$. For $n=4$ we have $4\times 2178 = 8712$.

From Proposition~\ref{Pdnm}, we obtain $2\times 11\times(10^{n-2}-1)=21\underbrace{99\ldots 99}_{n-4 \mbox{ times}}78 $. Now, admit that the result is true for some natural $k>4$, that is,
\begin{equation}
\label{Ednr}
4\times 21\underbrace{99\ldots 99}_{k-4 \mbox{ times}}78 = 87\underbrace{99\ldots 99}_{k-4 \mbox{ times}}12 \ .
\end{equation}
For $k+1$, we have
\begin{eqnarray}
\nonumber 4\times 21\underbrace{99\ldots 99}_{(k+1)-4 \mbox{ times}}78 &=&
\label{Ednr1} 4\times 11\times(10^{(k+1)-2}-1) \\
 &=& 4\times 2[\times 11\times(10\times 10^{k-2}-10+10-1)]\\
   \nonumber&=& 10\times [2\times(11\times (10^{k-2}-1)]+4\times 2\times 11\times 9 \ .    
\end{eqnarray}
Using the induction hypothesis, Equation~\eqref{Ednr}, in Equation~\eqref{Ednr1}, we have
\begin{eqnarray*}
4\times 21\underbrace{99\ldots 99}_{(k+1)-4 \mbox{ times}}78 & = & 10\times(87\underbrace{99\ldots 99}_{k-4 \mbox{ times}}12)+792\\
&=& 87\underbrace{99\ldots 99}_{k-4 \mbox{ times}}120 + 792\\
&=& 87\underbrace{99\ldots 99}_{k-4 \mbox{ times}}912\\
&=& 87\underbrace{99\ldots 99}_{(k+1)-4 \mbox{ times}}12\    
\end{eqnarray*}
completes the proof.
\end{proof}

We can now prove Theorem~\ref{Tdnm}.
\begin{proof}{\bf of Theorem~\ref{Tdnm}}\\

If $n=4$,  $2178$ is the reverse divisor of $8712$ (Example~\ref{Exdnm}).

For $n>4$ let $Y=2\times 11\times(10^{n-2}-1)$. From, Proposition~\ref{Pdnm}, we have $Y= 21\underbrace{99\ldots 99}_{n-4 \mbox{ times}}78$. Because $Y'=87\underbrace{99\ldots 99}_{n-4 \mbox{ times}}12$, it follows from Proposition~\ref{Pdnr} that $Y'=4\cdot Y$.

\end{proof}

\section{Magic reverse divisors}
\label{SDRM}
. 
In this section, we identify the numbers that are both Ball's magic numbers and reverse divisors.

\begin{theorem}
\label{TDRM}
For every $n> 1$, the reverse divisor of $11\times(10^{n}-1)$ is a Ball's number.
\end{theorem}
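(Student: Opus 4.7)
The plan is to reduce the claim to Corollary \ref{CRu} via a one-line algebraic identity. First, I would interpret the statement as follows: by Theorem~\ref{Tnm} (applied with index shift $n \mapsto n+2$, which is legal since $n>1$ means $n+2>3$), the number $11\times(10^{n}-1)$ is precisely $10\underbrace{99\ldots 99}_{n-2 \text{ times}}89$, a reverse divisor (its reverse equals nine times itself). So what remains is to show that this same number is a Ball's magic number.

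The key observation is the identity
\[
11\times(10^{n}-1) \;=\; 99 \times R_n,
\]
where $R_n=\underbrace{11\ldots 11}_{n \text{ times}}=\tfrac{10^n-1}{9}$ is the repunit with $n$ digits. This is immediate from the repunit formula recalled in the remark following Corollary~\ref{CRu}: multiply both sides of $R_n=(10^n-1)/9$ by $99$.

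Once this identity is in place, Corollary~\ref{CRu} (which asserts exactly that $99\times R_n$ is a Ball's magic number for every $n>0$) finishes the argument: since $n>1$ certainly satisfies the hypothesis of that corollary, $99\times R_n$ and hence $11\times(10^n-1)$ is a Ball's magic number. Combined with the reverse-divisor conclusion of Theorem~\ref{Tnm} above, this yields the result.

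There is no serious obstacle here; the proof is essentially a bookkeeping step matching indices between Corollary~\ref{CRu} and Theorem~\ref{Tnm}. The only minor care needed is that the truncated code $R_n$ produced by Proposition~\ref{PRU}/Corollary~\ref{CRu} corresponds, via the identity $B=99 \cdot Z_{\overline{n+1}}$ from the proof of Theorem~\ref{PBb-1}, to the initial number $11\times(10^n-1)$ itself, so nothing new needs to be verified about the code structure.
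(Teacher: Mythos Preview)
Your proof is correct and follows essentially the same route as the paper: both establish the identity $11\times(10^{n}-1)=99\times R_n$ and then invoke the repunit-code result (you cite Corollary~\ref{CRu}, the paper cites the underlying Proposition~\ref{PRU} directly) to conclude this is a Ball's magic number. Your additional remark that Theorem~\ref{Tnm} supplies the reverse-divisor half of the statement is a sensible clarification that the paper leaves implicit.
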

\begin{proof}

For all $n>1$, we have that
\[
11\times(10^{n}-1)=11\times \underbrace{99\ldots 99}_{ n \mbox{ times}} = 11\times 9 \times \underbrace{11\ldots 11}_{ n \mbox{ times}} = 99\times \underbrace{11\ldots 11}_{ n \mbox{ times}} \ .
\]

 From Proposition \ref{PRU} we have $\underbrace{11\ldots 11}_{ n \mbox{ times}}0$ as the code. Thus, the list $\underbrace{11\ldots 11}_{ n \mbox{ times}}$ is truncated code. Therefore,  $99\times \underbrace{11\ldots 11}_{ n \mbox{ times}}$  is the Ball's magic number.
\end{proof}

From Theorem \ref{TDRM} and the results presented in Section \ref{Snm}, we find that each reverse divisor of form $11\times(10^n-1)$ has $9$ as the reverse quotient.

The following result from Theorem \ref{TDRM}:

\begin{corollary}

If $B$ is a magic number and reverse divisor, then $B$ added to its reverse $B'$ is $10B$.
\end{corollary}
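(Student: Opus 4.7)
The plan is to reduce the claim $B+B'=10B$ to showing that the reverse quotient of $B$ equals $9$. Since $B+B'=(1+9)B=10B$ whenever $B'=9B$, everything hinges on identifying the shape of those $B$ that are simultaneously Ball magic numbers and reverse divisors, and verifying that their reverse quotient is indeed $9$.

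First I would exploit the classification of reverse divisors encoded in Theorems~\ref{Tnm} and~\ref{Tdnm}: any reverse divisor belongs either to the family $10\underbrace{99\ldots 99}_{n-4}89 = 11\cdot(10^{n-2}-1)$ or to the family $21\underbrace{99\ldots 99}_{n-4}78 = 2\cdot 11\cdot(10^{n-2}-1)$. The first family consists of Ball magic numbers by Theorem~\ref{TDRM}, and by Proposition~\ref{Pnr} its members satisfy $B'=9B$, giving $B+B'=10B$ at once. To finish the proof one only has to eliminate the second family.

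Eliminating the second family is the only nontrivial step. A number $2\cdot 11\cdot(10^{n-2}-1)=99\cdot 2R_{n-2}$ would, if it were a Ball magic number, have $2R_{n-2}=\underbrace{22\ldots 2}_{n-2}$ as its truncated code by Theorem~\ref{PBb-1}; but Definition~\ref{Dcod} requires every digit of a code (and hence of a truncated code) to lie in $\{0,1\}$, a contradiction. This digit-parity argument is the main (and essentially only) obstacle in the plan; everything else is a direct citation of results proved earlier in the paper.
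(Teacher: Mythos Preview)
Your proof is correct and follows the same route as the paper: identify the magic reverse divisor $B$ with $11\times(10^{n}-1)$ and then invoke Proposition~\ref{Pnr} to obtain $B'=9B$, hence $B+B'=10B$. You are in fact more explicit than the paper, which cites Theorem~\ref{TDRM} for the shape of $B$ without ruling out the $22\times(10^{n}-1)$ family; your observation that equation~\eqref{ECT} forces $B/99$ to have only $0$--$1$ digits cleanly fills that gap.
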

\begin{proof}
From Theorem \ref{TDRM}, we obtain that $B=99\times \underbrace{11\ldots 11}_{ n \mbox{ times}} = 11\times(10^{n}-1) \ .
$
Additionally, from Proposition \ref{Pnr} we have $B'=9\times 11\times(10^{n}-1)$ which completes the proof.
\end{proof}

\begin{corollary}
The reverse divisor $22 \times (10^{n}-1)$, $n> 1$, is a magic number multiplied by $2$.
\end{corollary}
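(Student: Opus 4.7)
The plan is to exploit the factorization $22\times(10^n-1) = 2\cdot\bigl[11\times(10^n-1)\bigr]$ and then invoke two results already proved in the paper: Theorem~\ref{TDRM}, which shows that $11\times(10^n-1)$ is itself a Ball's magic number for every $n>1$, together with Propositions~\ref{Pdnm} and~\ref{Pdnr}, which describe the decimal shape of $2\times 11\times(10^m-1)$ and identify its digit-reversal.

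First I would set $B := 11\times(10^n-1)$ and appeal to Theorem~\ref{TDRM} to conclude that $B$ is a Ball's magic number (the proof of that theorem writes it explicitly as $99\times R_n$). The equality $22\times(10^n-1) = 2B$ is then immediate, and this settles the ``magic number multiplied by $2$'' portion of the statement.

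Next I would verify that $22\times(10^n-1)$ is genuinely a reverse divisor. Writing the dummy variable of Proposition~\ref{Pdnm} as $N$ and substituting $N=n+2$ transforms its hypothesis $N>3$ into the corollary's hypothesis $n>1$, and yields
\[
22\times(10^n-1) \;=\; 2\times 11\times(10^n-1) \;=\; 21\underbrace{99\ldots 99}_{n-2\ \mbox{times}}78 .
\]
Applying Proposition~\ref{Pdnr} with the same substitution shows that four times this number equals its digit-reversal $87\underbrace{99\ldots 99}_{n-2\ \mbox{times}}12$. Hence $22\times(10^n-1)$ is a reverse divisor with reverse multiplier $k=4$, as required.

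The main obstacle is purely bookkeeping: the propositions in Section~\ref{Sdnm} are parametrised by an integer $N>3$ with $N-4$ underbraces, whereas the corollary uses the exponent $n>1$. The translation $N=n+2$ between these two parametrisations must be performed carefully so that the ranges of the hypotheses line up exactly, but once this identification is made, the argument reduces to a direct citation of Theorem~\ref{TDRM} and Propositions~\ref{Pdnm}--\ref{Pdnr}; no further computation is required.
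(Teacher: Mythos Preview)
Your proposal is correct and follows the same core idea as the paper: factor $22\times(10^n-1)=2\times\bigl[11\times(10^n-1)\bigr]$ and cite Theorem~\ref{TDRM} to identify the bracketed factor as a Ball magic number. The paper's proof is literally that one line; your additional verification via Propositions~\ref{Pdnm}--\ref{Pdnr} that $22\times(10^n-1)$ is actually a reverse divisor (with the $N=n+2$ reindexing) is extra rigor that the paper omits, since it treats the phrase ``reverse divisor $22\times(10^n-1)$'' as referring to an object already established in Section~\ref{Sdnm}.
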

\begin{proof}
Note that $ 22 \times (10^{n}-1)=2\times 11 \times (10^{n}-1) \ .
 $
\end{proof}
 
The following results show that it is possible to write reverse divisors by adding two other magic numbers.

\begin{theorem}

Let be a reverse divisor $D=11\times(10^{n}-1)$ with $n\geq 2$. There are two magic numbers, $B_1$ and $B_2$, such that $D=B_1+ B_2 $.
\end{theorem}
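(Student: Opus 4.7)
The plan is to exhibit a direct split of $D$ into two Ball magic numbers using the repunit identity $R_n = 1 + 10 R_{n-1}$ (with $R_k = \underbrace{11\ldots 1}_{k \mbox{ times}}$), together with the fact, recorded as Proposition~\ref{PBD}, that multiplication by $10$ sends Ball magic numbers to Ball magic numbers.

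First, from the proof of Theorem~\ref{TDRM}, rewrite $D = 11(10^n-1) = 99 R_n$. Multiplying the elementary identity $R_n = 1 + 10 R_{n-1}$, which holds for every $n \geq 2$, by $99$ gives
\[
D = 99 R_n = 99 + 10 \cdot 99 R_{n-1} =: B_1 + B_2 .
\]

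It then suffices to verify that both $B_1 = 99$ and $B_2 = 10 \cdot 99 R_{n-1}$ are Ball magic numbers. The first is immediate: any two-digit non-palindromic input runs through Algorithm~\ref{AnmBb} to $99$ (cf.\ Example~\ref{E1089}). For the second, Corollary~\ref{CRu} applied with $n-1 \geq 1$ shows that $99 R_{n-1}$ is itself a Ball magic number, and Proposition~\ref{PBD} then promotes it to the Ball magic number $10 \cdot 99 R_{n-1}$ by inserting the same nonzero digit at both ends of the $(n-1)$-digit input generating $99 R_{n-1}$. This gives the decomposition $D = B_1 + B_2$ as required.

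The main obstacle is purely conceptual: spotting the split $R_n = 1 + 10 R_{n-1}$; once this is in hand, the rest is a direct appeal to Corollary~\ref{CRu} and Proposition~\ref{PBD}. One could alternatively proceed via Proposition~\ref{PCod} by decomposing the code $R = R_n 0$ of $D$ as $A + C$ (with $A$ a code and $C$ an extended code), dividing through by $10$, and then multiplying by $99$; this route mirrors the paper's code-theoretic philosophy, but the construction inside Proposition~\ref{PCod} becomes degenerate for $n=2$, so the repunit split has the virtue of treating every $n \geq 2$ uniformly.
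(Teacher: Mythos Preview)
Your proof is correct, and it takes a genuinely different route from the paper's. The paper argues abstractly: it invokes Proposition~\ref{PCod} to split the code $\underbrace{1\ldots1}_{n}0$ as a code $A$ plus an extended code $C$, then multiplies through by $99$ and appeals to Theorem~\ref{PBb-1}. Your argument instead fixes a concrete decomposition via the repunit identity $R_n = 1 + 10R_{n-1}$, and verifies each summand directly using Example~\ref{E1089}, Corollary~\ref{CRu}, and Proposition~\ref{PBD}. At the code level your split is exactly $\underbrace{1\ldots1}_{n}0 = \underbrace{1\ldots1}_{n-1}00 + 10$, which is the first of the two decompositions the paper displays in Example~\ref{EDNM} (there for $n=4$), though the paper's \emph{proof} of Proposition~\ref{PCod} actually manufactures the other one. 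Your route is more elementary, avoids the code machinery entirely, and, as you point out, handles $n=2$ without the degeneracy in the construction inside Proposition~\ref{PCod} (where the requirements $a_1=0$ and $a_{n-1}=1$ collide). The paper's approach, on the other hand, makes it transparent that many such decompositions exist, not just the one you exhibit.
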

\begin{proof}

It follows from Theorem \ref{TDRM} that the reverse divisor is $D= 99\times \underbrace{11\ldots 11}_{ n \mbox{ times}}$. The corresponding code was $R= \underbrace{11\ldots 11}_{ n \mbox{ times}}0$. From Proposition \ref{PCod} we can decompose code $R$ as the sum of code $A$ and the extended code $C$; that is, $R=A+C$.
In this manner, the reverse divisor $D$ is the sum of magic numbers because
\begin{eqnarray*}
D &=& 99\times \underbrace{11\ldots 11}_{n \mbox{ times}}= 99 \times R \\
&=& 99 \times (A+C) = 99 \times A+ 99 \times C \ .
\end{eqnarray*}
From Theorem \ref{PBb-1} we obtain $B_1=99 \times A$ and $B_2= 99 \times C$ as Ball's magic numbers.
\end{proof}

\begin{example}
We can write the reverse divisor $D=1099989= 99 \times 11111$ as the sum of $B_1=990099=99\times 10001$ and $B_2=10989=99\times 1110$, both of which are magic numbers.
\end{example}

\begin{example} 
\label{EDNM}

The reverse divisor $D=109 98 9 = 99 \times 1111$ has a code $11110 = 11100+10 \ .$
So, 
\begin{eqnarray*}
109 98 9 &= & 99 \times 1111 = 99 \times (1110+1) \\
 &= & 99 \times 1110 +99 \times 1 = 10989 0 +99 \ .
\end{eqnarray*}
Alternatively, we can write the code as $11110 = 10010+1100 $. So,
\begin{eqnarray*}
109 98 9 &= & 99 \times 1111 = 99 \times (1001+110) \\
 &= & 99 \times 1001 +99 \times 110 = 99 0 99 + 1089 0 \ .
\end{eqnarray*}
\end{example}

\begin{example}
\label{EMult}
We also have $1001001\times 111=111111111$, so $99\times 1001001\times 111=99\times 111111111$. It follows that the reverse divisor $99\times 111$ and magic number $99\times 1001001$ are divisors of the  reverse divisor $99\times 111111111$. We can give a similar fact by $101\times 11=1111$ because this leads to $99\times 101\times 11=99\times 1111$.
\end{example}

Example \ref{EMult} allows us to conclude that by factoring repunits into codes or extended codes, we obtain magic, a divisor of reverse divisors. The following result provides reverse divisors with a special type of divisor:

\begin{theorem}
\label{reversomagico}
  A reverse divisor $D=11\times(10^{2^n}-1)$ with $n\geq 2$ has at least $n+1$ magic numbers as divisors.
\end{theorem}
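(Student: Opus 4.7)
The plan is to exhibit explicit magic divisors of $D$ obtained by factoring $10^{2^n}-1$ via iterated differences of squares. Recursive application of $a^{2}-b^{2}=(a-b)(a+b)$ gives
\begin{equation*}
10^{2^n}-1 \;=\; (10-1)\prod_{k=0}^{n-1}\bigl(10^{2^k}+1\bigr),
\end{equation*}
so that
\begin{equation*}
D \;=\; 11\times (10^{2^n}-1) \;=\; 99 \times \prod_{k=0}^{n-1}\bigl(10^{2^k}+1\bigr).
\end{equation*}

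From this factorization I would read off $n$ magic divisors of $D$. For each $k\in\{0,1,\dots,n-1\}$, set
\begin{equation*}
M_k \;=\; 99\times\bigl(10^{2^k}+1\bigr).
\end{equation*}
Then $M_k$ divides $D$ because $10^{2^k}+1$ appears in the displayed product, and $M_k$ is a Ball's magic number by Corollary~\ref{CNMP2} applied with $m=2^k\geq 1$. The $M_k$ are pairwise distinct because $10^{2^k}+1$ has exactly $2^k+1$ digits, a count that strictly increases with $k$.

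To reach $n+1$ (indeed $n+2$) magic divisors, I would adjoin two further ones: $99$ itself, which is a Ball's magic number (the $n=2$ row in the table of Section~\ref{SQNB}, with truncated code $1$), and $D$ itself, which is a Ball's magic number by Theorem~\ref{TDRM} (with $R_{2^n}$ serving as truncated code via Corollary~\ref{CRu}). Both additions are genuinely new: $99<M_0=1089$; and $D=99\times R_{2^n}$ strictly exceeds $M_{n-1}$, since $R_{2^n}$ has $2^n$ digits while $10^{2^{n-1}}+1$ has only $2^{n-1}+1$, and $2^n>2^{n-1}+1$ whenever $n\geq 2$.

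I expect no substantive obstacle: the core is the polynomial identity, and the magic-ness of each listed divisor is handed to us directly by Corollary~\ref{CNMP2} and Theorem~\ref{TDRM}. The only step that genuinely uses the hypothesis $n\geq 2$ is the distinctness check, which would collapse at $n=1$ because $M_{n-1}$ and $D$ would coincide there.
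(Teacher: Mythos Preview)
Your proof is correct and follows essentially the same route as the paper: factor $10^{2^n}-1$ via iterated differences of squares, invoke Corollary~\ref{CNMP2} to certify that each $99\times(10^{2^k}+1)$ is magic, and count $D$ itself (magic by Theorem~\ref{TDRM}) as the $(n{+}1)$st. Your version is in fact a bit tighter than the paper's---you verify distinctness explicitly and squeeze out an extra divisor by noting that $99$ itself is magic, yielding $n+2$ rather than merely $n+1$.
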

    \begin{proof}

 Factoring $D$, we get
    
    \begin{eqnarray*}
    D=11\times10^{2^n}-1&=&11(10-1)(10+1)(10^2+1)(10^4+1)...(10^{2^{n-1}}+1)\\
    &=&99(10+1)(10^2+1)(10^4+1)\hdots (10^{2^{n-1}}+1)
    \end{eqnarray*}

It follows from Proposition \ref{PP2} that  $(10^{2^n}-1)$ is the code for every $m>0$. Therefore,  from Corollary \ref{CNMP2}, every $B_m=11\times(10^{2^m}-1)$ is a divisor of $D$ to $0\leq m\leq n-1$. We conclude that magic numbers of the form $B_m=99\times(10^{2^{m}}+1)$ with $0\leq m\leq n-1$ are the divisors of $D$. Because $D$  is a magic number, we have at least $n+1$ divisors.
\end{proof}

\begin{example}

The number $1099999989=11\times(10^8-1)$ is divisible by the magic numbers: $990099=99\times(10^4+1)$, $9999=99\times(10^2+1)$ and $1089 =99\times(10+1)$.
\end{example}

The best magic number was 1089. It is a reverse divisor; some multiples are also reverse divisors, as shown below.

\begin{proposition}
\label{PUZm}

For all $n>1$ and $uz(n)\in UZ$, $X_n=1089\times uz(n)$ is the Ball's  magic number.
\end{proposition}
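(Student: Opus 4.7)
The plan is to use the factorization $1089 = 99 \times 11$ to rewrite $X_n = 99 \times (11 \cdot uz(n))$, and then to recognize the factor $11 \cdot uz(n)$ via Proposition \ref{PUZ}, which already describes how this product sits inside the world of codes and truncated codes according to the parity of $n$. The even/odd split of Proposition \ref{PUZ} will drive the case analysis.

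For $n$ odd, the computation inside the proof of Proposition \ref{PUZ} gives $11 \cdot uz(n) = R_{n+1}$, the repunit with $n+1$ ones. Hence $X_n = 99 \times R_{n+1}$, and Corollary \ref{CRu} identifies this immediately as a Ball's magic number.

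For $n$ even, the same computation yields $11 \cdot uz(n) = 10 \cdot R_n$, so
\[
X_n \;=\; 99 \times 10 \cdot R_n \;=\; 10 \cdot (99 \cdot R_n).
\]
Corollary \ref{CRu} makes $99 \cdot R_n$ a Ball's magic number, and Proposition \ref{PBD}---which asserts that multiplying a Ball's magic number by $10$ produces another Ball's magic number (the one generated from the original $x_n$ by padding the same nonzero digit at both ends)---upgrades the equation above to the conclusion that $X_n$ is a Ball's magic number.

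I do not anticipate a real obstacle. The only step that requires attention is the parity split: in the even case $11 \cdot uz(n)$ is a code rather than a truncated code, so Theorem \ref{PBb-1} cannot be invoked directly on $X_n = 99 \times (11 \cdot uz(n))$; one must route through Proposition \ref{PBD} to absorb the extra factor of $10$ and recover a genuine Ball's magic number. Apart from this mild bookkeeping, the proof is a straightforward assembly of Proposition \ref{PUZ}, Corollary \ref{CRu}, and Proposition \ref{PBD}.
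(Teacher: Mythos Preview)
Your proof is correct and follows essentially the same route as the paper: factor $1089=99\times 11$, invoke Proposition~\ref{PUZ} to identify $11\cdot uz(n)$, and then split on parity. The only cosmetic difference is in the citations---where the paper appeals to Theorem~\ref{PBb-1} and Remark~\ref{est} (extended codes) for the two cases, you use the equivalent Corollary~\ref{CRu} and Proposition~\ref{PBD}; your version is in fact more explicit about the computations $11\cdot uz(n)=R_{n+1}$ (odd) and $11\cdot uz(n)=10R_n$ (even).
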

\begin{proof}

For all $n>1$, $1089\times uz(n)= 99\times 11\times uz(n) .$

From Proposition \ref{PUZ} the factor $11\times uz(n)$ is truncated or extended code. In both cases, Theorems \ref{PBb-1} or Remark \ref{est} guarantee that $1089\times uz(n)$ is the magic number.
\end{proof}

\begin{theorem}
\label{TUZ}

For all odd $n>1$, $X_n=1089\times uz(n)$ is the reverse divisor.
\end{theorem}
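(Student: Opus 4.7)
The plan is to reduce the claim to the class of reverse divisors already described in Theorem~\ref{Tnm}, namely numbers of the form $10\underbrace{99\ldots 99}_{m\text{ times}}89$. Because $1089=99\times 11$, the first step is the rewriting
\[
X_n = 1089\times uz(n) = 99\times\bigl(11\times uz(n)\bigr).
\]
For odd $n$, Proposition~\ref{PUZ} (consistent with the tabulated examples $11\times uz(3)=1111$ and $11\times uz(5)=111111$) gives that $11\times uz(n)$ is the repunit with $n+1$ ones. Using $R_{n+1}=(10^{n+1}-1)/9$, this yields
\[
X_n = 99\times R_{n+1} = 11\times(10^{n+1}-1).
\]

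The second step identifies the decimal expansion of $X_n$. I would apply Proposition~\ref{Pnm} with its dummy variable taken to be $n+3$; since $n>1$ is odd we have $n\geq 3$, hence $n+3>3$ as that proposition requires. The conclusion reads
\[
X_n = 11\times(10^{n+1}-1) = 10\underbrace{99\ldots 99}_{n-1\text{ times}}89.
\]

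Finally, I would invoke Theorem~\ref{Tnm} on this shape: the reverse of $X_n$ is $98\underbrace{99\ldots 99}_{n-1\text{ times}}01 = 9\cdot X_n$, so $X_n$ is indeed a reverse divisor with reverse quotient $9$. The main obstacle is the index bookkeeping, because the paper juggles several closely related length counters: the $n$ in $uz(n)$, the exponent $n-2$ inside Proposition~\ref{Pnm}, and the block length $n-4$ in Theorem~\ref{Tnm}. Once those are aligned, no genuine computation is needed; a sanity check at $n=3$ gives $X_3 = 1089\times 101 = 109989 = 11\times 9999$ with reverse $989901 = 9\times 109989$, matching the predicted structure.
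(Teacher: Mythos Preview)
Your argument is correct and follows exactly the paper's own route: rewrite $X_n=99\times(11\times uz(n))$, use Proposition~\ref{PUZ} to recognize $11\times uz(n)$ as a repunit, and then feed the resulting $11\times(10^{n+1}-1)$ into Proposition~\ref{Pnm} and Theorem~\ref{Tnm}. Your explicit index alignment (taking the parameter in Proposition~\ref{Pnm} to be $n+3$, giving the block of $n-1$ nines) is more careful than the paper's terse proof, and in fact your identification $11\times uz(n)=R_{n+1}$ agrees with the paper's own examples even though the proof of Proposition~\ref{PUZ} writes the index as $R_n$.
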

\begin{proof}

Thus, $X_n=99\times11\times uz(n)$. Proposition \ref{PUZ} indicates that $11\times uz(n)$ is one repunit. From Theorem \ref{Tnm} and Proposition \ref{Pnm} follows this result.
\end{proof}

\begin{corollary}

For all odd $n>1$, $Y_n=2178\times uz(n)$ is the reverse divisor.
\end{corollary}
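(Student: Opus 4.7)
The plan is to reduce this corollary to the already-proved Theorem~\ref{Tdnm} via the factorization $2178 = 2 \times 1089 = 2 \times 99 \times 11$. Writing
\[
Y_n = 2178 \times uz(n) = 2 \cdot 99 \cdot (11 \times uz(n)),
\]
I would first identify $11 \times uz(n)$ as a repunit: the computation $11 \times uz(n) = 10 \times uz(n) + uz(n) = uz(n+1) + uz(n)$ carried out inside the proof of Proposition~\ref{PUZ} shows that, for odd $n$, $11 \times uz(n)$ is the repunit with $n+1$ digits, i.e.\ $R_{n+1}$. Using $99\, R_{n+1} = 11\,(10^{n+1}-1)$, this rewrites as
\[
Y_n \;=\; 2 \times 11 \times (10^{n+1}-1).
\]

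The second step is to apply Proposition~\ref{Pdnm} with its parameter set to $n+3$ so that $10^{(n+3)-2} - 1 = 10^{n+1} - 1$; since $n$ is odd with $n > 1$, we have $n + 3 \geq 6 > 3$, so the hypothesis of that proposition holds. It then supplies the closed form
\[
Y_n \;=\; 21\underbrace{99\ldots 99}_{n-1 \mbox{ times}}78.
\]
Theorem~\ref{Tdnm}, invoked with the same shifted parameter, asserts exactly that this number is the reverse divisor of $87\underbrace{99\ldots 99}_{n-1 \mbox{ times}}12$ (the reverse quotient being $4$), which is what the corollary claims. No new induction is required: Propositions~\ref{Pdnm} and~\ref{Pdnr} have already done the induction on digit-count, and we are merely instantiating them.

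The only real hazard is index-tracking. Propositions~\ref{Pnm} and~\ref{Pdnm} are indexed by the total digit-count of the reverse divisor they describe, whereas $uz(n)$ is indexed by the digit-count of the undulating factor, and Proposition~\ref{PUZ} introduces a further offset (the repunit obtained has $n+1$ digits, not $n$). Once those three indexing conventions are reconciled as above — so that the $n$ in the corollary matches $n+3$ in Proposition~\ref{Pdnm} and Theorem~\ref{Tdnm} — the argument is a one-line chain of rewrites ending in a citation.
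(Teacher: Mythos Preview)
Your proposal is correct and follows essentially the same route as the paper: factor $2178 = 2 \times 1089 = 2 \times 99 \times 11$, identify $11\times uz(n)$ as a repunit via Proposition~\ref{PUZ}, rewrite $Y_n$ as $2\times 11\times(10^{n+1}-1)$, and invoke Theorem~\ref{Tdnm}. The paper's version is more compressed --- it cites Theorem~\ref{TUZ} to package the repunit identification already done for $X_n = 1089\times uz(n)$ and then observes $Y_n = 2X_n$ --- whereas you unpack that step directly and track the index shift to $n+3$ explicitly; the substance is the same.
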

\begin{proof}

From Theorem \ref{Tdnm} it follows that $2178$ is the reverse divisor. As $2178=2\times 1089$, it follows from Theorem \ref{TUZ} that $Y_n$ is also a reverse divisor.
\end{proof}

\section{Reverse divisor and square number}
\label{SDRQM}

We use $R_n$, $n\geq 1$, for a list with only ones or repunit,  and $X$ or $Y$ for any code (list formed by ones and zeros). We want to determine whether the product of the two magic numbers can result in a square number, that is,  $X\times Y=R_n^2$. In this section, we respond negatively to this statement.

\begin{lemma}[\cite{costadouglas2}]
If $n>1$, $10^n-1$ is not a square number.
\end{lemma}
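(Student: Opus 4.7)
The plan is to use a short congruence argument modulo $4$, exploiting the fact that the quadratic residues mod $4$ are only $\{0,1\}$. I would first record this elementary observation: for any integer $k$, either $k=2m$ (so $k^2 = 4m^2 \equiv 0 \pmod 4$) or $k = 2m+1$ (so $k^2 = 4m^2+4m+1 \equiv 1 \pmod 4$); consequently no integer square is congruent to $3$ modulo $4$.

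Next, I would compute the residue of $10^n - 1$ modulo $4$ under the hypothesis $n > 1$. Since $10^2 = 100 = 4 \cdot 25$, we have $10^2 \equiv 0 \pmod 4$, and therefore for every $n \geq 2$
\[
10^n \;=\; 10^{n-2} \cdot 10^2 \;\equiv\; 0 \pmod 4,
\]
so that $10^n - 1 \equiv -1 \equiv 3 \pmod 4$.

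Combining the two facts immediately yields the lemma: if $10^n - 1$ were equal to some $k^2$, then $k^2 \equiv 3 \pmod 4$, contradicting the observation that squares are $\equiv 0$ or $1$ mod $4$. Hence $10^n - 1$ is not a perfect square for any $n > 1$.

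There is essentially no obstacle here; the only real choice is picking the right modulus. Working mod $10$ would only narrow the possible last digit of $k$ (since $k^2 \equiv 9$ forces $k \equiv 3$ or $7 \pmod{10}$) without ruling anything out, whereas mod $4$ closes the argument in one line. Note also that the restriction $n > 1$ is sharp and necessary, since $10^1 - 1 = 9 = 3^2$ is a square (and indeed $10 \equiv 2 \pmod 4$, making $10 - 1 \equiv 1 \pmod 4$, so the mod-$4$ obstruction only kicks in for $n \geq 2$).
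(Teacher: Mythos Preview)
Your argument is correct and is essentially identical to the paper's: both observe that $10^n\equiv 0\pmod 4$ for $n\ge 2$, hence $10^n-1\equiv 3\pmod 4$, which is not a quadratic residue. You simply spell out more details (why squares are $0$ or $1$ mod $4$, and why the bound $n>1$ is sharp), but the route is the same.
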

\begin{proof}
For $n>1$, $10^n$ is a multiple of $4$, therefore, $10^n-1$ leaves the remainder at $3$ when divided by $4$. The division of a square number by four admits only remainders $0$ or $1$. We conclude that $10^n-1$ is not a square.
\end{proof}

\begin{lemma} [\cite{costadouglas1, niven}]
The number $R_n$ with $n>1$ is not a square.
\end{lemma}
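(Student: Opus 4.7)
The plan is to mimic the modular-arithmetic argument used in the previous lemma, since the same obstruction modulo $4$ rules out squarehood for $R_n$ as well. The key observation is that for $n\geq 2$, the last two digits of $R_n=\underbrace{11\ldots 11}_{n\text{ times}}$ are always $11$, so $R_n\equiv 11\pmod{100}$ and in particular $R_n\equiv 11\equiv 3\pmod{4}$.

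From here, I would invoke the standard fact that the quadratic residues modulo $4$ are only $0$ and $1$: if $m=2k$ then $m^2=4k^2\equiv 0\pmod 4$, and if $m=2k+1$ then $m^2=4k^2+4k+1\equiv 1\pmod 4$. Since $R_n\equiv 3\pmod{4}$ is neither $0$ nor $1$ modulo $4$, it cannot be a perfect square.

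Alternatively (and equivalently), one could use the closed form $R_n=\dfrac{10^n-1}{9}$. Writing $10^n-1=9R_n$ and reducing modulo $4$, we have $10^n\equiv 0\pmod 4$ for $n\geq 2$, so $10^n-1\equiv 3\pmod 4$, and since $9\equiv 1\pmod 4$, this forces $R_n\equiv 3\pmod 4$, recovering the same obstruction. I would pick whichever presentation is shorter; the direct "last two digits" version is cleanest and most elementary.

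There is really no obstacle here: the argument is a one-line parity/modular check, and it closes in exactly the same way as the preceding lemma about $10^n-1$. The only thing to be a little careful about is stating explicitly that the hypothesis $n>1$ is used to guarantee that the last two digits are indeed both $1$'s (for $n=1$, $R_1=1$ which \emph{is} a square, showing the bound is sharp).
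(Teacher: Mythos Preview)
Your argument is correct. The direct observation that $R_n\equiv 11\equiv 3\pmod 4$ for $n\geq 2$, together with the fact that squares are $\equiv 0$ or $1\pmod 4$, settles the lemma immediately and is self-contained.

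The paper takes a slightly different route: it uses the identity $10^n-1=9R_n=3^2\cdot R_n$ and argues by contraposition---if $R_n$ were a square, then $10^n-1$ would be a product of two squares and hence a square, contradicting the preceding lemma. So the paper \emph{reduces} this lemma to the previous one rather than redoing the mod~$4$ computation. Your alternative version (via $9R_n=10^n-1$ and $9\equiv 1\pmod 4$) sits halfway between the two: it uses the same identity but unpacks the modular arithmetic explicitly instead of quoting the earlier result. Your primary ``last two digits'' approach is the cleanest standalone argument; the paper's approach has the virtue of making the logical dependence on the previous lemma explicit, which fits the narrative flow of that section.
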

\begin{proof}
It follows from
\begin{eqnarray*}
10^n-1 &=& 999\hdots 99=9\times 111\hdots 11 \\
       & = & 9 \cdot R_n = 3^2 \cdot R_n \ .
\end{eqnarray*}
Because $n\geq 2$, if $R_n$ is a square number, then $10^n-1$ is a square number too.
\end{proof}

\begin{remark}
A non-zero natural number formed by repeating the same digit (digit) is called a single digit, as in previous lemmas for $a=1$ or $9$. An interesting property of a single digit is that no single digit with two or more digits is a square number \cite{costadouglas2, niven}.
\end{remark}

According to \cite{beiler, costadouglas1, costadouglas2}, for all $n \geq 1$, the repunits $R_n$ can be expressed by the sequence

\begin{equation*}
\label{ERb10.1}
R_1=1 \mbox{ and } R_{n+1} = 10\cdot R_n+1 \mbox{ for } n>1 \ .
\end{equation*}

We denote the greatest common divisor between two non-zero natural numbers $a$ and $b$ as $(a,b)$. So,
\begin{lemma}
\label{Lmdc}

Let $m,n$ be natural numbers with $m \geq n>0$ then we have
$(R_m, R_n) = R_{(m,n)}$.
\end{lemma}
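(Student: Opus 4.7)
The plan is to mirror the Euclidean algorithm at the level of repunits. First I would establish the key decomposition
$$R_{a+b} = 10^b \cdot R_a + R_b,$$
which follows immediately from the decimal representation: the block of $a+b$ ones can be split as $a$ ones followed by $b$ ones, and shifting the first block $b$ places to the left accounts for the factor $10^b$. A routine consequence (by induction on $q$, or by iterating the identity) is that $R_n$ divides $R_{qn}$ for every $q \geq 1$, since
$$R_{qn} = R_n \cdot (1 + 10^n + 10^{2n} + \cdots + 10^{(q-1)n}).$$

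Next, performing the Euclidean division $m = qn + r$ with $0 \leq r < n$, I would apply the identity with $a = qn$ and $b = r$ to obtain
$$R_m = R_{qn+r} = 10^r \cdot R_{qn} + R_r.$$
Since $R_n$ divides $R_{qn}$ by the previous step, this reduces modulo $R_n$ to $R_m \equiv R_r \pmod{R_n}$, and therefore
$$(R_m, R_n) = (R_n, R_r).$$
This is exactly the Euclidean step on the indices: the pair $(m,n)$ has been replaced by $(n, r) = (n, m \bmod n)$.

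Finally, I would iterate: every Euclidean step performed on the indices $(m,n)$ is shadowed by the corresponding gcd step on $(R_m, R_n)$. The process terminates when the remainder vanishes, at which point the indices have descended to $d = (m,n)$. With the convention $R_0 = 0$, the final pair is $(R_d, 0)$, whose gcd is $R_d$, yielding $(R_m, R_n) = R_{(m,n)}$. The main obstacle is keeping the iteration honest: formally this is a short induction on the number of steps of the Euclidean algorithm applied to $(m, n)$, but each step is justified by the single modular congruence displayed above, so the only delicate point is the base case $r = 0$, where $n \mid m$ forces $(R_m, R_n) = R_n = R_{(m,n)}$ directly from the divisibility $R_n \mid R_{qn}$ already proved.
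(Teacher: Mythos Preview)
Your argument is correct and follows essentially the same route as the paper: both proofs shadow the Euclidean algorithm on the indices by the corresponding gcd steps on the repunits, using the decomposition $R_{a+b} = 10^{b} R_a + R_b$ and terminating at $R_{(m,n)}$. If anything, your version is more carefully written: you make the intermediate divisibility $R_n \mid R_{qn}$ explicit before reducing $R_m = 10^{r} R_{qn} + R_r$ modulo $R_n$, whereas the paper compresses this into the single (slightly garbled) line $R_m = 10^{r_1} R_n + R_{r_1}$.
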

\begin{proof}

There are integers $q, r_1$ such that $m=qn+r_1$ with $0\leq r_1 < n$. By applying the Euclidean division of $n$ by $r_1$, we obtain $n=q_1r_1+r_2$. Following this process, we obtain a sequence of remainders $r_1, r_2, \hdots , r_s, r_{s+1} = 0$, with $r_s = (m, n)$. Because  $R_m=10^{r_1}R_n+R_{r_1}$,
$
(R_m, R_n) = (R_n, R_{r_1}) = \hdots = (R_{r_s}
, R_{r_{s+1}} ) = (R_{r_s}, 0) = R_{(m,n)} \
.
$
\end{proof}

\begin{lemma}
\label{LRmn}
Let $m>n>1$ be integers; then, $R_m\cdot R_n$ is not a square number.
\end{lemma}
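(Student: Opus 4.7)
The plan is to assume for contradiction that $R_m R_n = s^2$ and then exploit Lemma~\ref{Lmdc} to split the product into coprime factors, each of which must then be a square, reducing the whole problem to a generalization of the previous lemma.

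Setting $d = \gcd(m,n)$, Lemma~\ref{Lmdc} gives $\gcd(R_m, R_n) = R_d$, so we can write $R_m = R_d\,\alpha$ and $R_n = R_d\,\beta$ with $\gcd(\alpha,\beta)=1$ (this coprimality being exactly what Lemma~\ref{Lmdc} buys us). Explicitly,
\[
\alpha \;=\; \frac{10^m - 1}{10^d - 1} \;=\; 1 + 10^d + 10^{2d} + \cdots + 10^{(a-1)d}, \qquad a = m/d,
\]
and $\beta$ has the same form with $b = n/d$. Substituting into $R_m R_n = R_d^2\,\alpha\beta = s^2$ shows $\alpha\beta$ is a perfect square, and coprimality forces each of $\alpha,\beta$ to be squares individually. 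Since $m > n \geq 2$ we have $a \geq 2$, so the entire argument collapses once we show that a sum of the form
\[
T_{k,d} \;=\; 1 + 10^d + 10^{2d} + \cdots + 10^{(k-1)d}, \qquad k \geq 2,\ d \geq 1,
\]
is never a perfect square.

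This sub-claim is the main obstacle. For $d = 1$ it is precisely the preceding lemma ($R_k$ not a square). For $k = 2$, writing $1 + 10^d = s^2$ gives $(s-1)(s+1) = 2^d 5^d$, whose coprime factorizations, subject to $(s+1)-(s-1)=2$, are too sparse to admit an integer solution. For $k = 3$, the identity $4\,T_{3,d} = (2\cdot 10^d + 1)^2 + 3$ settles it since no two squares differ by exactly $3$. For $k = 4$, the factorization $T_{4,d} = (1+10^d)(1+10^{2d})$ into coprime parts (their $\gcd$ divides $2$, and $1+10^d$ is odd) reduces to the $k = 2$ case. For $k = 5$ a direct squeeze gives $(2\cdot 10^{2d}+10^d)^2 < 4\,T_{5,d} < (2\cdot 10^{2d}+10^d+1)^2$ whenever $10^d \geq 4$, which always holds.

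For general $k \geq 6$ the sub-claim is most delicate. When $k = k_1 k_2$ is composite, the factorization $T_{k,d} = T_{k_1,d}\cdot T_{k_2,\,k_1 d}$ provides an inductive handle. For prime $k \geq 7$, $T_{k,d} = \Phi_k(10^d)$ is irreducible, and one must either push the squeeze argument further or invoke Ljunggren's classification showing that $1 + x + \cdots + x^{k-1} = y^2$ admits only $(x,k)=(3,5)$ and $(x,k)=(7,4)$, neither of which is of the form $x = 10^d$. This appeal to specialized repunit theory (see \cite{costadouglas1, costadouglas2}) is where the real technical difficulty lies; once the sub-claim is established, the coprime-squares contradiction completes the proof.
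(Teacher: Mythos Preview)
Your decomposition is exactly the paper's: invoke Lemma~\ref{Lmdc} to pull out the factor $R_d^2$ and reduce to a product of coprime cofactors, then observe that a square product of coprime integers forces each factor to be a square. The paper stops at that point and simply asserts that the cofactors $q_1=R_m/R_d$ and $q_2=R_n/R_d$ ``are not square numbers.'' When $d=1$ this is justified by the preceding lemma (the cofactors are then $R_m$ and $R_n$ themselves), but for $d>1$ the cofactors are the base-$10^d$ repunits $T_{a,d}$ and $T_{b,d}$, and the paper offers no argument for why those fail to be squares. You correctly isolate this as the genuine obstacle and supply a proof, ultimately resting on Ljunggren's classification of solutions to $(x^k-1)/(x-1)=y^2$. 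So your argument is strictly more complete than the paper's, at the cost of invoking a deep Diophantine result well beyond the level of the surrounding material. Two small remarks: once Ljunggren is on the table it already disposes of every $k\ge 3$, so your separate treatments of $k=3,4,5$ and the composite-$k$ factorization are redundant (only $k=2$ needs the direct argument); and your ``inductive handle'' for composite $k$ would still require checking coprimality of $T_{k_1,d}$ and $T_{k_2,k_1 d}$, which you do not do---harmless here, since Ljunggren makes the induction unnecessary anyway.
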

\begin{proof}
We denote this as $d=(m,n)$. From Lemma \ref{Lmdc} that $(R_m,R_n)=R_d$.  If $d=1$ then $R_m$ and $R_n$ are coprime, so there is no common divisor, which means that $R_m\cdot R_n$ is not a square because $R_m$ and $R_n$ are not square numbers. If $d>1$, $R_d$ is the greatest common divisor of $R_m$ and $R_n$. So, $R_m=R_d\cdot q_1$ and $R_n=R_d\cdot q_2$ with $q_1 \neq q_2$ , $(q_1,q_2)=1$ since $m\neq n$. We have
$R_m\cdot R_n = (R_d)^2\cdot q_1\cdot q_2 \ ,$
which implies that $R_m\cdot R_n$ is also not a square number because $q_1$ and $q_2$ are not square numbers.
\end{proof}

\begin{proposition}
\label{Pprimos} 
Let $m>n>1$ be integers; then, $(10^m-1)(10^n-1)$ is not a square number.
\end{proposition}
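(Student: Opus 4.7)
The plan is to reduce the claim immediately to Lemma~\ref{LRmn}, which already states that $R_m\cdot R_n$ is not a square for $m>n>1$. The key algebraic identity is $10^k-1 = 9\cdot R_k$ for every $k\geq 1$, which lets us rewrite the product in terms of repunits and a perfect square.

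First, I would write
\[
(10^m-1)(10^n-1) \;=\; (9\,R_m)(9\,R_n) \;=\; 9^2\cdot R_m\cdot R_n.
\]
Then I would argue by contradiction: if $(10^m-1)(10^n-1)$ were a square number, then since $9^2$ is a square and the square numbers form a multiplicatively closed set whose quotients (when defined in the integers) are also squares, the quotient $R_m\cdot R_n = (10^m-1)(10^n-1)/9^2$ would also be a square. This directly contradicts Lemma~\ref{LRmn}. Hence $(10^m-1)(10^n-1)$ cannot be a square.

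The only mildly delicate point is the ``dividing by $9^2$ preserves squareness'' step, but since $9^2 = 81$ exactly divides the product (both factors are divisible by $9$) and the exact quotient is the integer $R_m\cdot R_n$, the argument is clean: writing $(10^m-1)(10^n-1) = s^2$ forces $s$ to be divisible by $9$ (as $3$ divides $s$ twice from each factor contributing one $3$, or more directly because $81 \mid s^2$ implies $9 \mid s$), and then $(s/9)^2 = R_m\cdot R_n$ would exhibit $R_m\cdot R_n$ as a square. I do not anticipate any real obstacle; the whole content is the factorization $10^k-1 = 9R_k$ together with the previously established Lemma~\ref{LRmn}.
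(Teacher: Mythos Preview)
Your proposal is correct and matches the paper's own proof essentially line for line: the paper also writes $(10^m-1)(10^n-1)=9^2\cdot R_m\cdot R_n$ via $10^k-1=9R_k$ and then invokes Lemma~\ref{LRmn} to conclude. Your extra care about why dividing out $9^2$ preserves squareness is a harmless elaboration of a step the paper leaves implicit.
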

\begin{proof} 

We can write $(10^m-1)=9\cdot R_m$. Therefore,
$(10^m-1)(10^n-1)=9^2\cdot R_m \cdot R_n \ .$
It follows from Lemma \ref{LRmn} that $R_m\cdot R_n$ is not a square number, and thus $(10^m-1)(10^n-1)$ is not a square number.
\end{proof}

Finally, we have 
\begin{theorem}
 
The product of these two distinct reverse divisors is never a square.
\end{theorem}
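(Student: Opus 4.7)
The plan is to reduce the statement to Proposition \ref{Pprimos}, handling a small amount of bookkeeping coming from the two-parameter classification of reverse divisors noted after Theorem \ref{Tdnm}. By Theorems \ref{Tnm} and \ref{Tdnm} together with Webster's classification, every reverse divisor has the shape $D = c \cdot 11 \cdot (10^{k}-1)$ for some $k\geq 2$ and some $c\in\{1,2\}$ (the value $c=1$ giving the $10\underbrace{99\ldots 99}_{k-2}89$ class and $c=2$ giving the $21\underbrace{99\ldots 99}_{k-2}78$ class). Writing two distinct reverse divisors as $D_{1}=c_{1}\cdot 11\cdot(10^{a}-1)$ and $D_{2}=c_{2}\cdot 11\cdot(10^{b}-1)$, one obtains
\[
D_{1}D_{2}=121\cdot c_{1}c_{2}\cdot(10^{a}-1)(10^{b}-1),
\]
and I would split into three cases according to the equality of $a,b$ and of $c_{1},c_{2}$.

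First I would dispose of the case $a=b$: here distinctness forces $c_{1}\neq c_{2}$, so $\{c_{1},c_{2}\}=\{1,2\}$ and $c_{1}c_{2}=2$, giving $D_{1}D_{2}=2\cdot\bigl(11(10^{a}-1)\bigr)^{2}$; twice a nonzero perfect square is never a perfect square, finishing this case. Next, for $a\neq b$ with $c_{1}=c_{2}$, the coefficient $121\,c_{1}c_{2}=(11c_{1})^{2}$ is itself a square, so $D_{1}D_{2}$ is a square exactly when $(10^{a}-1)(10^{b}-1)$ is, and Proposition \ref{Pprimos} rules that out directly.

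The remaining case, $a\neq b$ with $c_{1}\neq c_{2}$, is the step where Proposition \ref{Pprimos} does not immediately suffice and where the main (small) obstacle lies, because an extra factor of $2$ could in principle convert a non-square whose squarefree part equals $2$ into a square. Here $c_{1}c_{2}=2$ and
\[
D_{1}D_{2}=2\cdot 11^{2}\cdot(10^{a}-1)(10^{b}-1),
\]
so I would need to show that $2(10^{a}-1)(10^{b}-1)$ is not a perfect square. I would close this with a $2$-adic parity observation: since $10^{k}-1$ is odd for every $k\geq 1$, the product $(10^{a}-1)(10^{b}-1)$ is odd, so $2(10^{a}-1)(10^{b}-1)$ is divisible by $2$ but not by $4$, which prevents it from being a perfect square. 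Assembling the three cases proves that $D_{1}D_{2}$ is never a square.
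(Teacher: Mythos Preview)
Your proof is correct and follows the same route as the paper: invoke Webster's classification to write every reverse divisor as $c\cdot 11\cdot(10^{k}-1)$ with $c\in\{1,2\}$, then reduce to Proposition~\ref{Pprimos}. Your case analysis is in fact more thorough than the paper's terse argument, which simply cites Proposition~\ref{Pprimos} without explicitly treating the $a=b$ or mixed-$c$ situations that you dispose of via the $2$-adic parity observation.
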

\begin{proof}

From Theorem \cite{webs2} that a reverse divisor has the form $11\times(10^m-1)$ or $22\times(10^m-1)$. From Proposition \ref{Pprimos}, we conclude that the product of distinct reverse divisors is never a square number.
\end{proof}
However, the product of the reverse divisor and its reverse is always a square (see \cite{webs2}).

\section{Conclusion}

Throughout this study, we establish surprising connections among magic numbers, reverse divisors, square numbers, repunits and undulating numbers. Because math is generous, we are sure that we can obtain other interesting properties involving these numbers. We believe that recreational and curious activities play an important role in all sciences and, in particular, can make mathematics seem more natural. We invite the reader to this journey.
\bibliography{sn-bibliography}
\end{document}